\documentclass[12pt]{article}
\usepackage{amsmath}
\usepackage{amsfonts}
\usepackage{amssymb}
\usepackage{amsmath}
\usepackage{amsthm}
\usepackage{latexsym}
\usepackage{hyperref}
\usepackage{graphicx}
\usepackage{epstopdf}
\usepackage{float}
\usepackage{amstext,amsbsy,amscd,mathrsfs,graphics,amsxtra}
\usepackage[english]{babel}
\usepackage{latexsym}
\usepackage[english]{babel}
\setcounter{MaxMatrixCols}{10}
\usepackage{color}

\newcommand{\R}{\mathbb R}

\newtheorem{theorem}{Theorem}[section]

\newtheorem{definition}[theorem]{Definition}

\newtheorem{lemma}[theorem]{Lemma}

\newtheorem{proposition}[theorem]{Proposition}
\newtheorem{remark}[theorem]{Remark}

\begin{document}
\title{{{{On flows associated to Tanaka's SDE and related works}}}}
%\author{Hatem Hajri}
\date{}

\maketitle
\begin{center}
\renewcommand{\thefootnote}{(\arabic{footnote})}
\scshape Hatem Hajri\footnote{Universit\'e Paris Ouest Nanterre La
D\'efense, Laboratoire Modal'X. Email: hatem.hajri.fn@gmail.com\newline This research was partially supported by the National
Research Fund, Luxembourg, and cofunded under the Marie Curie
Actions of the European Comission (FP7-COFUND).}
\renewcommand{\thefootnote}{\arabic{footnote}}\setcounter{footnote}{0}
\end{center}
\begin{abstract}
We review the construction of flows associated to Tanaka's SDE from \cite{MR2235172} and give an easy proof of the classification of these flows by means of probability measures on $[0,1]$. Our arguments also simplify some proofs in the subsequent papers \cite{MR2835247,MR50101111,MR000,MR0000000}. 
\end{abstract}

%*****************************************************************************

\section{Introduction}
Our main interest in this paper is Tanaka's equation  
\begin{equation}\label{oui}
X^x_t=x+\int_{0}^{t}\text{sgn}(X^x_s) dW_s
\end{equation}
where $W$ is a standard Brownian motion and $x\in\R$. Following \cite{MR2235172}, our purpose here is to determine the set 
$$\mathscr P =\{P^{\infty}=(P^n)_{n\ge 1}\}$$
 where each $P^{\infty}=(P^n)_{n\ge 1}$ is a compatible \cite{MR2060298} family of Feller semigroups acting respectively on $C_0(\R^n)$ such that for each $n$ the coordinates of the $n$ point motion (Markov process) associated to $P^n$ are solutions of (\ref{oui}) driven by the same Brownian motion $W^n$. The compatibility means that for all $k\le n$, any $k$ coordinates of an $n$ point motion (associated to $P^n$) form a $k$ point motion and for any permutation $\sigma$ of $\{1,\cdots,n\}$, if $(X^1,\cdots,X^n)$ is the $n$ point motion starting from $(x_1,\cdots,x_n)$, then $(X^{\sigma(1)},\cdots,X^{\sigma(n)})$ is the $n$ point motion starting from $(x_{\sigma(1)},\cdots,x_{\sigma(n)})$.

%
 %and moreover $P^{\infty}$ is compatible  in the sense that for all $k\leq n$,
%$$P^k_tf(x_1,\cdots,x_k)=P^n_tg(y_1,\cdots,y_n),$$
%where $f$ and $g$ are in $C_0(\mathbb R^n)$ such that 
%$$g(y_1,\cdots,y_n)=f(y_{i_1},\cdots,{y_{i_k}})$$
%with $\{i_1,\cdots,i_k\}\subset\{1,\cdots,n\}$ and $(x_1,\cdots,x_k)=(y_{i_1},\cdots,y_{i_k})$.
There is a one to one correspondance (see Section \ref{reminder}) between $\mathscr P$ and the set 
$$\mathscr K=\bigg\{\text{Law of}\ K : K\ \text{is a solution of the generalized Tanaka's SDE}\bigg\}$$
where the generalized Tanaka's SDE is defined as follows 
\begin{definition}\label{couche}
Let $K$ be a stochastic flow of kernels and $W$ be a real white noise. We say that $(K,W)$ is a solution of (the generalized) Tanaka's SDE $(T)$ if for all $s\leq t, x\in\R$, $f\in C^2_b(\mathbb R)$ ($f$ is $C^2$ on $\R$ and $f',f''$ are bounded), a.s.
\begin{equation}\label{dom}
K_{s,t}f(x)=f(x)+\int_s^tK_{s,u}(f'{\textrm{sgn}})(x)dW_{s,u}+\frac{1}{2}\int_s^t K_{s,u}f''(x)du
\end{equation}
We say that $K$ is a Wiener solution of $(T)$ if for all $s\le t$, $\mathcal F^K_{s,t}=\sigma(K_{u,v}, s\le u\le v\le t)$ is contained in $\mathcal F^W_{s,t}=\sigma(W_{u,v}, s\le u\le v\le t)$. Any flow of kernels $K$ solving $(T)$ that is not Wiener, will be called a weak solution.
\end{definition}
We recall that $(W_{s,t})_{s\le t}$ is a real white noise if there exists a (unique) Brownian motion on the real line $(W_t)_{t\in\R}$ such that $W_{s,t}=W_t - W_s$ for all $s\le t$. As a consequence of Definition \ref{couche}, if $(K,W)$ solves $(T)$, then $\mathcal F^W_{s,t}\subset\mathcal F^K_{s,t}$ for all $s\le t$ (see Lemma 3.1 in \cite{MR2235172}) and so we may just say $K$ solves $(T)$.

The main result of \cite{MR2235172} shows that $\mathscr K$ is in bijection with  
$$\mathscr M = \left\{m\in \mathcal P([0,1]), \int_{0}^1 x\ dm(x)=\frac{1}{2}\right\}$$
where $\mathcal P([0,1])$ is the set of all probability measures on $[0,1]$. This is summarized in the following
\begin{theorem}\cite{MR2235172}\label{a}  
\begin{itemize}

\item[(1)]  Let $m$ be a probability measure on $[0,1]$ with mean $1/2$. There exist a stochastic flow of kernels (unique in law) $K^{m}$ and a real white noise $W$ such that $(K^{m},W)$ solves $(T)$ and such that if $W_{s,t}^+=W_{s,t}-\displaystyle\inf_{u\in[s,t]}W_{s,u}$ and $\tau_{s}(x)=\inf\{r\geq s:\ W_{s,r}=-|x|\}$, then for all $s\le t$ and $x\in\R$, a.s.
$$K^{m}_{s,t}(x)=\delta_{x+\textrm{sgn}(x)W_{s,t}}1_{ \{t\leq \tau_{s}(x)\}}+(U_{s,t}\delta_{W_{s,t}^{+}}+(1-U_{s,t})\delta_{-W_{s,t}^{+}}) 1_{\{t> \tau_{s}(x)\}}$$
where for each $s<t$, $U_{s,t}$ is independent of $W$ and has for law $m$.
\item[(2)]  
$$K_{s,t}(x)=\delta_{x+\textrm{sgn}(x)W_{s,t}}1_{ \{t\leq \tau_{s}(x)\}}+\frac{1}{2}(\delta_{W_{s,t}^{+}}+\delta_{-W_{s,t}^{+}}) 1_{ \{t> \tau_{s}(x)\}}$$
is the unique (up to modification) Wiener solution of $(T)$; it corresponds to $m=\delta_{\frac{1}{2}}$. For $m=\frac{1}{2}(\delta_0 + \delta_1)$, $K^m$ is induced by a coalescing flow of mappings $\varphi^c$ i.e. $K^m=\delta_{\varphi^c}$. Moreover $\varphi^c$ is the law unique stochastic flow of mappings such that for all $s\le t$ and $x\in\R$ a.s. 
$$\varphi^c_{s,t}(x) = x + \int_{0}^{t} \text{sgn}(\varphi^c_{s,u}(x)) dW_{s,u}.$$
%in the sense that if $K'$ is any Wiener solution of $(T)$, then we necessarily have for all $s\le t$ and $x\in\R$, a.s. $K_{s,t}(x)=K'_{s,t}(x)$. 

\item[(3)]  For any flow $K$ solution of $(T)$, there exists a unique probability measure $m$ on $[0,1]$ with mean  $1/2$ such that $K\overset{law}{=}K^{m}$.
\end{itemize}
\end{theorem}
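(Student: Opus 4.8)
The plan is to prove the three parts of Theorem~\ref{a} in a way that isolates the probabilistic mechanism behind Tanaka's SDE: a flow of kernels solving $(T)$ must, at the level of a single point, behave like a Brownian motion until it hits the origin, and thereafter split according to some law, and the only freedom is in how this splitting is shared across different starting points and different time intervals.

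\medskip
\noindent\textbf{Step 1: the one-point motion and the hitting time of the origin.}
First I would apply \eqref{dom} with a single starting point $x$. Taking $f$'s that approximate $x\mapsto |x|$ and $x\mapsto x$, a.s. $t\mapsto K_{s,t}f(x)$ solves a closed SDE showing that, writing $\beta_{s,t}(x)$ for a one-point motion, $|\beta_{s,t}(x)| = |x| + \int_s^t \mathrm{sgn}(\beta)\,d(\,\cdot\,) + L_t$ with $L$ the local time at $0$; combined with the defining relation $d\beta = \mathrm{sgn}(\beta)\,dW$, this forces $\beta_{s,\cdot}(x)$ to coincide with $x+\mathrm{sgn}(x)W_{s,\cdot}$ up to the time $\tau_s(x)=\inf\{r\ge s: W_{s,r}=-|x|\}$, at which the path first reaches $0$. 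This is exactly the indicator term $1_{\{t\le\tau_s(x)\}}$ in the stated formula, and it is forced, not chosen. After $\tau_s(x)$ the absolute value of the motion is $W^+_{s,t}=W_{s,t}-\inf_{u\in[s,t]}W_{s,u}$, so the kernel $K_{s,t}(x)$ is supported on $\{W^+_{s,t},-W^+_{s,t}\}$ on the event $\{t>\tau_s(x)\}$: only the mass split between the two points remains to be described.

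\medskip
\noindent\textbf{Step 2: reducing the split to a single $[0,1]$-valued random variable per time interval.}
Let $U_{s,t}$ be the random mass $K_{s,t}(x)$ puts on $W^+_{s,t}$ once $t>\tau_s(x)$. The key point is that $U_{s,t}$ does not depend on $x$: any two points have coalesced (their absolute values agree, and they are either both on the positive or both on the negative excursion side) by the time both have hit $0$, and the flow property together with the a.s. relation above propagates a common $U$. Next I would show $U_{s,t}$ is independent of $W$: this uses that $K$ is a flow \emph{of kernels} whose noise, by the remark after Definition~\ref{couche}, contains $\mathcal F^W$, and that the martingale part of $K_{s,\cdot}f(x)$ is entirely captured by the $dW$ integral in \eqref{dom}; hence the extra randomness in $U_{s,t}$ is orthogonal to $W$, and by a conditioning/independent-increments argument it is genuinely independent of the whole white noise. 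The mean constraint $\mathbb E[U_{s,t}]=1/2$ comes from taking $f$ odd in \eqref{dom}: $\mathbb E[K_{s,t}f(x)]=P_{t-s}^1 f(x)$ is the one-point heat-kernel expectation, which is symmetric, forcing the expected split to be even. So to each solution $K$ we attach $m=\mathrm{Law}(U_{s,t})$ (independent of $s<t$ by stationarity) $\in\mathscr M$.

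\medskip
\noindent\textbf{Step 3: consistency across time intervals, and identification of the law of $K$.}
The flow property $K_{s,t}=K_{s,u}K_{u,t}$ imposes a relation among $U_{s,u}$, $U_{u,t}$ and $U_{s,t}$ (on the relevant excursion events), and together with the independence from $W$ this pins down the entire finite-dimensional structure of $K$ in terms of $m$ and $W$ alone --- that is, the stated formula for $K^m_{s,t}$ determines $\mathrm{Law}(K)$ uniquely given $m$, which is part~(1), and conversely shows that $\mathrm{Law}(K)$ is a function of $m$ only, giving the injectivity needed for part~(3). Part~(2) then follows by specialization: $m=\delta_{1/2}$ makes $U_{s,t}\equiv 1/2$ deterministic, so $K$ is measurable with respect to $W$, i.e. Wiener, and uniqueness of the Wiener solution follows because any Wiener solution has its $U_{s,t}$ both independent of $W$ \emph{and} $\mathcal F^W$-measurable, hence a.s. constant, hence $=1/2$ by the mean constraint; $m=\tfrac12(\delta_0+\delta_1)$ makes each $U_{s,t}\in\{0,1\}$, so $K_{s,t}(x)$ is always a Dirac mass, i.e. $K=\delta_\varphi$ for a flow of maps, and the coalescing/SDE characterization of $\varphi^c$ is the classical pathwise statement for Tanaka's equation. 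Finally, for part~(3) it remains to check that the correspondence $m\mapsto\mathrm{Law}(K^m)$ is injective, which is immediate since $m$ is recovered as $\mathrm{Law}(U_{s,t})$, a functional of $\mathrm{Law}(K)$.

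\medskip
\noindent\textbf{Main obstacle.} The delicate step is Step~2: proving that the splitting variable $U_{s,t}$ is both independent of $x$ \emph{and} independent of $W$, purely from the weak formulation \eqref{dom} and the flow/compatibility axioms, without invoking the explicit construction. The $x$-independence relies on a coalescence argument for the one-point motions after they hit $0$ that must be made rigorous at the level of kernels (not just maps), and the $W$-independence requires identifying the extra noise carried by $K$ beyond $W$ and arguing it enters only through $U$; this is where the ``easy proof'' advertised in the abstract has to do its real work, presumably by a clean martingale-representation or filtration argument in place of the longer computations of \cite{MR2235172}.
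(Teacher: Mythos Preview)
Your outline captures the right picture --- before $\tau_s(x)$ the kernel is Dirac and forced by $W$, afterwards it is a two-point mass on $\{\pm W^+_{s,t}\}$, and the only freedom is the splitting variable $U_{s,t}$ --- but the argument breaks at exactly the place you flag as the main obstacle, and the paper's route is genuinely different from what you sketch.

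\medskip
\textbf{The gap.} In Step~2 you write that ``the martingale part of $K_{s,\cdot}f(x)$ is entirely captured by the $dW$ integral in \eqref{dom}; hence the extra randomness in $U_{s,t}$ is orthogonal to $W$, and by a conditioning/independent-increments argument it is genuinely independent of the whole white noise.'' This does not go through: $L^2$-orthogonality to Wiener chaos is far from independence of $W$, and no independent-increments argument bridges that gap (the extra noise carried by $K$ need not itself have independent increments in any obvious sense before you know its structure). Your proof of part~(2) then becomes circular: you deduce Wiener uniqueness from ``$U_{s,t}$ independent of $W$ \emph{and} $\mathcal F^W$-measurable $\Rightarrow$ constant,'' but the first clause is precisely the unproved step.

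\medskip
\textbf{What the paper actually does.} The logical order is reversed. Part~(2) is proved \emph{first and directly}, without any mention of $U$: one introduces the Feller semigroup $Q_t(f\otimes g)(x,w)=E[K_{0,t}f(x)\,g(w+W_t)]$, lets $(X^x,B)$ be the associated Markov process, and shows by a short martingale-problem computation that $K_{0,t}f(x)=E[f(X^x_t)\mid\mathcal F^W_{0,t}]$ whenever $K$ is Wiener. Since the law of $(X^x,W)$ is known (Tanaka), the right-hand side is explicit and unique. Part~(3) then \emph{uses} part~(2): given any solution $K$, filter it with respect to $\sigma(W)$ to get $\hat K$, which is a Wiener solution of $(T)$, hence equal to $K^W$ by part~(2); this immediately gives the claimed two-point form with $U_{s,t}=K_{s,t}(0,[0,\infty[)$ for every $x$ simultaneously, bypassing your coalescence-at-the-kernel-level argument. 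The independence of $U_t$ from $W$ is then obtained not by an abstract filtration argument but by a concrete probabilistic one: via Lemma~\ref{immm} it is equivalent to showing that, for the $n$-point motion $(X^1,\dots,X^n)$ started at $0$, the sign vector $(\mathrm{sgn}(X^1_t),\dots,\mathrm{sgn}(X^n_t))$ is independent of $|X^1|=B^+$; and this is proved by checking that $Z_t=\mathrm{sgn}(X^1_t)\cdots\mathrm{sgn}(X^k_t)\,B^+_t$ is a skew Brownian motion of parameter $\alpha_k$, through a random-walk approximation along the level-crossing times of $B^+$. The skew Brownian motion is the ``clean'' device that replaces the filtration machinery of \cite{MR2235172}; your proposal does not contain it or an equivalent.
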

In all this paper, a stochastic flow of kernels $K$ on $\R$ is defined as in \cite{MR2060298} with the additional assumption that $(s,t,x,\omega)\mapsto K_{s,t}(x,\omega)$ is measurable from $\{(s,t,x,\omega), s\leq t, x\in\R, \omega\in\Omega\}$ into $\mathcal P(\R)$. This is important for the integrals in (\ref{dom}) to be defined.

Let us now describe the content of this paper and our contributions to the study of $(T)$. In Section \ref{reminder}, we explain the correspondance between Feller compatible solutions to (\ref{oui}) and flows solutions to $(T)$. The content of this section was implicit in \cite{MR000} and \cite{MR0000000}. We write it here since our proofs later will strongly rely on it. In Section \ref{tou}, we briefly review the construction of the flows $K^m$ from \cite{MR2235172}, thus sketching the proof of Theorem \ref{a} $(1)$. In Section \ref{tft}, we prove Theorem \ref{a} $(2)$. The idea to establish this fact for $(T)$ and other varieties of it which appeared in \cite{MR2835247,MR50101111,MR000,MR0000000} was to show that whenever $K$ is a Wiener solution, the Wiener chaos expansion of $K_{0,t}f(x)$ is unique for $f$ bounded and smooth enough (this was not explicitly written in \cite{MR2235172}). This idea was not easy to formulate especially when the one point motion is the Walsh process (see \cite{MR2835247}). The arguments used here, based only on martingale problems, are elementary; they apply for all examples considered in \cite{MR2835247,MR50101111,MR000,MR0000000,MR501878} and also give the explicit chaos expansion of Wiener solutions in an easy way at least in the Brownian case (see Remark \ref{mama}). They may also be applied for equations driven by noises with a less obvious theory of chaos expansion for example L\'evy processes. The unicity in law of $\varphi^c$ will not be reproduced here (see Lemma 3.3 in \cite{MR2235172}). In Section \ref{tr}, we present an easy proof of Theorem \ref{a} $(3)$. The original proof of \cite{MR2235172} is based on a non trivial progressive modification of the flow and a deep result on Brownian filtrations (Lemma 4.11 \cite{MR2235172}). Here, we first reformulate this statement in terms of the compatible family of Feller semigroups associated to $K$. The proof is then achieved by means of the skew Brownian motion. The motivation behind the use of this process is the following observation: If $(X,Y)$ is the two point motion associated to $\varphi^c$ (resp. $K^W$) starting from $(0,0)$, then $\text{sgn}(X_t) Y_t$ is a reflecting (resp. standard) Brownian motion. For any solution $K$, we prove the natural conjecture that $\text{sgn}(X_t) Y_t$ is a skew Brownian motion. Its skewness parameter measures how far $\text{sgn}(X_t)$ and $\text{sgn}(Y_t)$ are from each other and in this way we are able to classify the laws of all the two point motions. Our arguments also apply for the works \cite{MR2835247,MR50101111,MR0000000,MR501878}. Finally in Section \ref{trrr}, we write down the generators of the $n$-point motions associated to the flows $K^m$ and show their dependence on $m$.
\section{Stochastic flows of kernels and weak solutions}\label{reminder}
In this section, we will explain the link between the usual Tanaka's equation (\ref{oui}) and the generalized one $(T)$. The main result is Proposition \ref{propop} below. It shows  that each $P^{\infty}\in\mathscr P$ corresponds to a unique stochastic flow of kernels solution of $(T)$ and vice versa. The second half of the proposition is stated only for completeness and will not be used in this paper. 
\begin{proposition}\label{propop}
\begin{itemize}

\item[(1)] Let $K$ be a solution of $(T)$ and let $(P^n)_n$ be its associated compatible family of Feller semigroups given by $P^n_t=E[K^{\otimes n}_{0,t}]$. Then for all $x_1,\cdots,x_n\in \R$: if $X=(X^1,\cdots,X^n)$ is the $n$-point motion associated to $P^n$ started from $(x_1,\cdots,x_n)$ and defined on $(\Omega^n,\mathcal A^n,\mathbb P^n)$, there exists an $(\mathcal F_t^X)_t$ Brownian motion $W^n$ (on the same probability space) such that for all $i$,
\begin{equation}\label{der}
X^i_t=x_i + \int_{0}^{t}\text{sgn}(X^i_s) dW^n_s.
\end{equation}
\item[(2)] Let $(P^n)_n$ be a compatible family of Feller semigroups acting respectively on $C_0(\R^n)$ such that for each $n$ and $x_1,\cdots,x_n\in \R$: if $X=(X^1,\cdots,X^n)$ is the Markov process associated to $P^n$ and started from $(x_1,\cdots,x_n)$ defined on $(\Omega^n,\mathcal A^n,\mathbb P^n)$, there exists an $(\mathcal F_t^X)_t$ Brownian motion $W^n$ (on the same probability space) such that for each $i$, $(X^i,W^n)$ satisfies (\ref{der}). Then there exist a stochastic flow of kernels $K$ and a real white noise $W$ such that $(K,W)$ solves $(T)$ and moreover $P^n_t=E[K_{0,t}^{\otimes n}]$.

\end{itemize}
\end{proposition}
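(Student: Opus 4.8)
The plan is to prove Proposition \ref{propop} in two directions, with part (1) being essentially a bookkeeping exercise and part (2) requiring the genuine construction.

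For part (1), I would start from the martingale problem characterization. Given a solution $K$ of $(T)$, set $P^n_t = E[K^{\otimes n}_{0,t}]$; the compatibility and Feller properties of the family $(P^n)_n$ follow from the corresponding properties of the flow of kernels (cocycle property, measurability, and the convolution structure of $K^{\otimes n}$) together with Definition \ref{couche}. The key point is to verify (\ref{der}). Let $X = (X^1,\dots,X^n)$ be the $n$-point motion associated to $P^n$ started at $(x_1,\dots,x_n)$. Applying (\ref{dom}) with $s=0$ and taking expectations against the kernel shows that for $f \in C^2_b(\R)$, $f(X^i_t) - f(x_i) - \frac12\int_0^t f''(X^i_u)\,du$ is a martingale, so each $X^i$ is a solution of the Tanaka martingale problem and hence $\langle X^i \rangle_t = t$; localizing $f(y)=y$ gives that $M^i_t := X^i_t - x_i$ is a continuous local martingale with $\langle M^i \rangle_t = t$, i.e.\ a Brownian motion. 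To recover the \emph{common} driving noise, I would define $W^n_t := \int_0^t \text{sgn}(X^i_s)\,dM^i_s$ and check, using $\text{sgn}(X^i)^2 = 1$ a.e.\ and the joint martingale problem for the pair $(X^i, X^j)$ (which comes from applying (\ref{dom}) to the two-point kernel and to functions on $\R^2$), that this does not depend on $i$ and that $W^n$ is an $(\mathcal F^X_t)$-Brownian motion; then $X^i_t = x_i + \int_0^t \text{sgn}(X^i_s)\,dW^n_s$ by inverting the stochastic integral (multiplying the integrand $\text{sgn}(X^i_s)$ back in). The Itô--Tanaka / occupation-time argument showing that $\int_0^t \mathbf 1_{\{X^i_s = 0\}}\,ds = 0$ a.s.\ (so that $\text{sgn}$ ambiguities at $0$ are harmless) is the one slightly delicate point, but it is standard for solutions of the Tanaka martingale problem.

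For part (2), the construction goes through the theory of \cite{MR2060298}: a compatible consistent family of Feller semigroups $(P^n)_n$ satisfying the symmetry/exchangeability hypotheses built into $\mathscr P$ canonically produces a stochastic flow of kernels $K$ with $E[K^{\otimes n}_{0,t}] = P^n_t$, together with its underlying noise. What must then be checked is that this $K$, with an appropriately extracted white noise $W$, solves the SDE $(T)$ in the sense of (\ref{dom}). Here I would run the argument of part (1) in reverse at the level of the flow: fix $f \in C^2_b(\R)$ and $x$, consider the process $t \mapsto K_{0,t}f(x)$, and show it is a semimartingale whose decomposition is forced by the generator of $P^2$ (acting on the diagonal-type observables $g(y,z) = f(y)$, $g(y,z)=f(y)f(z)$) to be exactly $\int_0^t K_{0,u}(f'\,\text{sgn})(x)\,dW_{0,u} + \frac12\int_0^t K_{0,u}f''(x)\,du$; the white noise $W$ is identified from the one-point motion as in part (1), and the identification of the martingale part's bracket with $\int_0^t \big(K_{0,u}(f'\text{sgn})(x)\big)^2 du$ is what pins down the stochastic integral against $dW$. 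Finally, the cocycle identity in $s$ (not just $s=0$) follows from stationarity of the increments of the flow.

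The main obstacle is the measurability/regularity bookkeeping needed to make sense of $t \mapsto K_{0,t}f(x)$ as an honest semimartingale adapted to the right filtration and to justify the stochastic integral $\int_0^t K_{0,u}(f'\text{sgn})(x)\,dW_{0,u}$ — this is precisely why the paper imposes joint measurability of $(s,t,x,\omega)\mapsto K_{s,t}(x,\omega)$. Granting that, plus the Le Jan--Raimond machinery, the rest is a diffusion-theoretic translation between the flow-of-kernels formulation and the $n$-point SDEs. Since part (2) is stated only for completeness and not used later, I would give part (1) in full and merely indicate part (2), referring to \cite{MR2060298} and \cite{MR2235172} for the flow construction.
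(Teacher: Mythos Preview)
Your argument for part (1) is correct but differs from the paper's. You recover the common driving noise intrinsically from the $n$-point motion, setting $W^n_t=\int_0^t\text{sgn}(X^i_s)\,dX^i_s$ and using the two-point martingale problem to get $\langle X^i,X^j\rangle_t=\int_0^t\text{sgn}(X^i_s)\text{sgn}(X^j_s)\,ds$, hence independence of~$i$. The paper instead enlarges the state space: it defines the Feller semigroup $Q^n_t(f\otimes g)(x,w)=E[K^{\otimes n}_{0,t}f(x)\,g(w+W_t)]$ on $\R^n\times\R$, so that the associated Markov process is $(X,B)$ with $B$ already a Brownian motion, and then reads off $\langle X^i,B\rangle_t=\int_0^t\text{sgn}(X^i_s)\,ds$ from the generator of $Q^2$. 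Both routes need the same approximation step to pass from smooth test functions to the bracket; yours is slightly more economical for this proposition alone, but the $Q^n$ construction is a tool the paper reuses in the proofs of Proposition~\ref{ali} and Lemma~\ref{immm}, which is why it is singled out.

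For part (2) your sketch has a genuine gap: you do not say how the white noise $W$ is produced from the abstractly constructed flow $K$. Saying it is ``identified from the one-point motion as in part (1)'' is not enough: part (1) gives, for each fixed starting configuration, a Brownian motion $W^n$ adapted to $\mathcal F^X$, but you need a two-parameter family $(W_{s,t})_{s\le t}$ living on the same probability space as $K$ and measurable with respect to $\mathcal F^K_{s,t}$. The paper resolves this by passing through the \emph{coalescing} flow: it first applies Theorem~4.1 of \cite{MR2060298} to obtain the coalescing family $(P^{n,c})$, then Theorems~4.2 and~2.1 to realise jointly a flow of mappings $\varphi$ (with $\delta_\varphi$ having $n$-point motions $P^{n,c}$) and the kernel flow $K$, with $K_{s,t}(x)=E[\delta_{\varphi_{s,t}(x)}\mid K]$. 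The white noise is then defined concretely as $W_{s,t}=\lim_{x\to+\infty,\,x\in\mathbb Q}(\varphi_{s,t}(x)-x)$, which works because for $x$ large the Tanaka SDE reduces to $\varphi_{s,t}(x)=x+W_{s,t}$; the cross-variation identity (\ref{animal}) for the coalescing $n$-point motions is what makes $W$ a genuine white noise and $(\varphi,W)$ a solution of $(T)$. Your semimartingale-decomposition idea for $t\mapsto K_{0,t}f(x)$ could in principle be made to work, but without first producing $W$ on the flow's probability space there is nothing to integrate against.
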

\begin{proof}
(1) Let $(K,W)$ be a solution of $(T)$ defined on $(\Omega,\mathcal A,\mathbb P)$ and let $(W_t)_{t\in\R}$ be the unique Brownian motion on the real line such that $W_{s,t}=W_t - W_s$ for all $s\le t$. For $n\ge 1$, $t\ge 0$, $f\in C_0(\R^n), g\in C_0(\R)$ and $(x,w)\in\R^n\times\R$, define
$$Q^n_t(f\otimes g)(x,w)=E[K^{\otimes n}_{0,t} f(x) g(w+W_t)].$$
It is elementary to check that $Q^n$ is a Feller semigroup for all $n$. These semigroups are among the main tools in this paper, they were introduced in \cite{MR0000000} Section 5.1.  Fix $(x_1,\cdots,x_n)\in\R^n$ and let $(X,B)$ be the Markov process associated to $Q^n$ and started from $(x_1,\cdots,x_n,0)$. In particular, $B$ is a Brownian motion. Write $X=(X^1,\cdots,X^n)$, then we will prove that for each $i$,
\begin{equation}\label{doub}
X^i_t=x_i + \int_{0}^{t}\text{sgn}(X^i_s) dB_s.
\end{equation}
Denote by $A$ the generator of $Q^2$ and let 
$$\mathcal D=\{f\in C^2(\R), f,f',f''\in C_0(\R),\ f'(0)=0\}.$$
Note that $\mathcal D$ is dense in $C_0(\R)$. Since $(K,W)$ solves $(T)$, It\^o's formula shows that for all $f\in\mathcal D$, $g\in C^2(\R)$ such that $g, g''\in C_0(\R)$, we have $f\otimes g\in\mathcal D(A)$ and  
$$A(f\otimes g)(x,w) = \frac{1}{2}\Delta(f\otimes g)(x,w) + f'(x) \text{sgn} (x) g'(w)$$
where $\Delta$ is the Laplacian on $\R^2$ and so  
\begin{equation}\label{input}
f(X^i_t) g(B_t) - \int_{0}^{t} A(f\otimes g)(X^i_s,B_s) ds
\end{equation}
is a martingale. By It\^o's formula again
\begin{equation}\label{output}
f(X^i_t) g(B_t) - \frac{1}{2}\int_{0}^{t} \Delta(f\otimes g)(X^i_s,B_s) ds - \int_{0}^{t} f'(X^i_s) g'(B_s) d\langle X^i,B\rangle_s
\end{equation}
is also a martingale. Thus the difference (\ref{input}) - (\ref{output}) is a martingale which is also a process of finite variation so it is identically zero, i.e.
$$\int_{0}^{t} (f'\text{sgn})(X^i_s) g'(B_s) ds = \int_{0}^{t} f'(X^i_s) g'(B_s) d\langle X^i,B\rangle_s.$$
An approximation argument shows that $\langle X^i,B\rangle_t = \int_{0}^{t}\text{sgn}(X^i_s) ds$ and consequently (\ref{doub}) holds in $L^2(\mathbb P)$. To finish the proof, recall that $B$ is an $(\mathcal F_t^{X,B})_t$ Brownian motion and since $\mathcal F^B_t\subset \mathcal F_t^{X}$ (from (\ref{doub})), we deduce that $B$ is an  $(\mathcal F_t^{X})_t$ Brownian motion.

(2) Let $(P^{n,c})_{n\ge 1}$ be the compatible family of Feller semigroups associated to $(P^n)_{n\ge 1}$ by Theorem 4.1 \cite{MR2060298} (note that condition $(C)$ there is satisfied). Let $X^n=(X^{n,1},\cdots,X^{n,n})$ be the Markov process associated to $P^n$ and started from $(x_1,\cdots,x_n)\in\R^n$ and let $W^n$ be an $(\mathcal F_t^{X^n})_t$ Brownian motion such that for each $i$, $(X^i,W^n)$ satisfy (\ref{der}). The Markov process $Y^{n}=(Y^{n,1},\cdots,Y^{n,n})$ associated to $P^{n,c}$ started from $(x_1,\cdots,x_n)$ is coalescing and the construction of $Y^n$ from $X^n$ shows that each $Y^{n,i}$ is solution of (\ref{oui}) driven by $W^n$ and in particular, 
\begin{equation}\label{animal}
\langle Y^{n,i},Y^{n,j}\rangle_t=\int_{0}^{t} \text{sgn}(Y^{n,i}_s) \text{sgn}(Y^{n,j}_s) ds
\end{equation}
for all $i,j$. By Theorem 4.2 and 2.1 in \cite{MR2060298}, it is possible to construct on the same probability space a joint realization $(K^1,K^2)$ where $K^1$ and $K^2$ are two stochastic flows of kernels satisfying $K^1\overset{law}{=}\delta_{\varphi}$, $K^2\overset{law}{=}K$ and such that for all $s\leq t, x\in \R,\ K^2_{s,t}(x)=E[K^1_{s,t}(x)|K^2]$ a.s.  We notice that Theorem 1.1 and Theorem 2.1 in \cite{MR2060298} remain valid with the additional assumption that the flows of kernels are jointly measurable with respect to $(s,t,x,\omega)$ (see the lines before Lemma 1.10 in \cite{MR2060298}). Now to conclude the proof of $(2)$, we only need to check that $K^1$ (or $\varphi$) is solution of $(T)$. Define 
$$W_{s,t}=\lim_{x\rightarrow+\infty, x\in\mathbb Q}(\varphi_{s,t}(x) - x).$$
Using (\ref{animal}), we easily prove that $W$ is a real white noise and $(\varphi,W)$ solves $(T)$.

\end{proof}
\section{Construction of flows associated to Tanaka's SDE}\label{tou}
The content of this section is taken from \cite{MR2235172}. We fix a probability measure $m$ on $[0,1]$ with mean $1/2$, then using Kolmogorov extension theorem one can construct on a probability space $(\Omega,\mathcal A,\mathbb P)$ a process $(U_{s,t},W_{s,t})_{s\leq t}$ indexed by $\{(s,t)\in\R^2, s\leq t\}$ taking values in $[0,1]\times \R$ whose law is characterized by 
\begin{itemize}
 \item [(i)] $W_{s,t}:=W_t-W_s, s\leq t$ and $(W_t)_{t\in\R}$ is a Brownian motion on the real line.
 \item[(ii)] For fixed $s<t$, $U_{s,t}$ is independent of $W$ and $U_{s,t}\overset{law}{=}m$.
\end{itemize}
Set for all $s<t, \textrm{min}_{s,t}=\inf\{W_{u}: u\in[s,t]\}.$ Then
\begin{itemize}
\item [(iii)] For all $s<t$ and $\{(s_{i},t_{i}); 1\leq i\leq n \}$ with $s_{i}<t_{i}$, the law of $U_{s,t}$ knowing
  $(U_{s_{i},t_{i}})_{1\leq i\leq n}$ and $W$ is given by $m$ when $\textrm{min}_{s,t}\not\in\{\textrm{min}_{s_{i},t_{i}};1\leq i\leq n\}$ and is given by

 $$\sum\limits_{i=1}^{n}\displaystyle{\delta_{U_{s_{i},t_{i}}}\times\frac{1_{\{\textrm{min}_{s,t}=\textrm{min}_{s_{i},t_{i}}\}}}
 {\textrm{Card}\{i;\textrm{min}_{s_{i},t_{i}}=\textrm{min}_{s,t}\}}}$$
otherwise.
\end{itemize}
Note that (i)-(iii) uniquely define the law of $(U_{s_{1},t_{1}},\cdots,U_{s_{n},t_{n}},W)$ for all $s_i<t_i, 1\leq i\leq n$. 

For $s, x\in \R$, define $$\tau_{s}(x)=\inf\{r\geq s:\ W_{s,r}=-|x|\}$$ and for $x\in\R, s\leq t$, let $W_{s,t}^{+}=W_{t}-\textrm{min}_{s,t}$,
$$K^{m}_{s,t}(x)=\delta_{x+\textrm{sgn}(x)W_{s,t}}1_{ \{t\leq \tau_{s}(x)\}}+(U_{s,t}\delta_{W_{s,t}^{+}}+(1-U_{s,t})\delta_{-W_{s,t}^{+}}) 1_{ \{t> \tau_{s}(x)\}}.$$
Note that for all $s\le t$, $K^m_{s,t}$ is jointly measurable with respect to $(x,\omega)$. Setting $\tilde{U}_{s,t}=\limsup_{n\rightarrow\infty} U_{s_n,t_n}$ with $(s_n,t_n)=(\frac{\lfloor ns \rfloor+1}{n},\frac{\lfloor nt \rfloor-1}{n})$, we get a version of $K^m$ which is measurable from $\{(s,t,x,\omega), s\leq t, x\in\R, \omega\in\Omega\}$ into $\mathcal P(\R)$. The new version $(K^m,W)$ is a solution of $(T)$. 
\section{Unicity of the Wiener flow}\label{tft}
In this section, we prove the unicty of the Wiener solution of $(T)$ and then discuss some extensions of the proof.
\begin{proposition}\label{ali}
Let $(K,W)$ be a Wiener solution of $(T)$. Then for all $s\le t, x\in\R$, with probability $1$, 
$$K_{s,t}(x)=\delta_{x+\textrm{sgn}(x)W_{s,t}}1_{ \{t\leq \tau_{s}(x)\}}+\frac{1}{2}(\delta_{W_{s,t}^{+}}+\delta_{-W_{s,t}^{+}}) 1_{ \{t> \tau_{s}(x)\}}$$

\end{proposition}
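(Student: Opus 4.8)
The plan is to characterize the one-point motion and the two-point motion of a Wiener solution $K$ by solving appropriate martingale problems, and then to identify $K_{s,t}(x)$ itself by computing conditional expectations of the form $E[K_{s,t}^{\otimes 2}(x,x)\mid W]$. By stationarity of the increments it suffices to work with $s=0$, and we write $K_{0,t}$, $W_t$, $\tau(x)=\tau_0(x)$, etc. First I would note that, by Proposition \ref{propop}(1) applied with $n=1$, the one-point motion started at $x$ is a solution of Tanaka's equation \eqref{oui} driven by $W$ (since $K$ is Wiener, the driving Brownian motion is $W$ itself, or rather is $\sigma(W)$-measurable), hence $|X^x_t|=|x|+\beta_t$ for a Brownian motion $\beta$ and, by Tanaka's formula, $\beta_t = \int_0^t \mathrm{sgn}(X^x_s)\,dX^x_s = W_t + L^0_t(X^x)$, so $\beta$ is the reflection of $W+|x|$ and $|X^x_t|=(|x|+W_t)\vee(-\min_{0,t}) \cdot(\text{correctly: } |x|+W^+_{0,t} \text{ once } t>\tau(x))$. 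In particular, on $\{t\le\tau(x)\}$ the particle stays on its original side and $X^x_t=x+\mathrm{sgn}(x)W_{0,t}$, while on $\{t>\tau(x)\}$ we have $|X^x_t|=W^+_{0,t}$; this already gives the first indicator term and shows that, after $\tau(x)$, $K_{0,t}(x)$ is supported on $\{W^+_{0,t},-W^+_{0,t}\}$, so $K_{0,t}(x) = p_t\,\delta_{W^+_{0,t}} + (1-p_t)\,\delta_{-W^+_{0,t}}$ on that event for some $[0,1]$-valued random variable $p_t$. The whole problem is thus reduced to proving $p_t=\tfrac12$ a.s.

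To pin down $p_t$ I would use the second moment. Let $(X,Y)$ be the two-point motion from $(x,x)$ associated to $P^2=E[K_{0,\cdot}^{\otimes 2}]$; by Proposition \ref{propop}(1) each of $X,Y$ solves \eqref{oui} driven by the \emph{same} $W^2$, and since $K$ is Wiener one checks $W^2$ can be taken equal to $W$ (both coordinates are driven by the common white noise, and the filtration argument at the end of the proof of Proposition \ref{propop}(1) shows the driving noise is $W$). Hence $X=Y$ pathwise by pathwise uniqueness for... no — Tanaka's equation does \emph{not} have pathwise uniqueness, so I cannot conclude $X=Y$; instead the right statement is that $(X,Y)$ solves the martingale problem for the generator $L$ on $C^2_b(\R^2)$ given by $Lf = \tfrac12\Delta f + (\partial_{xx}+2\,\mathrm{sgn}(x)\mathrm{sgn}(y)\,\partial_x\partial_y \text{-type cross terms})$ — concretely $\langle X,Y\rangle_t=\int_0^t \mathrm{sgn}(X_s)\mathrm{sgn}(Y_s)\,ds$. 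Now here is the key computational step: apply It\^o to $\mathrm{sgn}(X_t)\mathrm{sgn}(Y_t)\cdot$(something), or more cleanly, observe that $Z_t:=\mathrm{sgn}(X_t)Y_t$ (well-defined once $X_t\ne0$) together with $|X_t|$ should satisfy a closed system: since $|X_t|=|Y_t|=W^+_{0,t}$ for $t>\tau(x)$ when $x=x$, actually $X_t=\pm W^+_{0,t}$ and $Y_t=\pm W^+_{0,t}$, so $X_tY_t = \pm (W^+_{0,t})^2$ and $E[X_tY_t\mid W] = (2r_t-1)(W^+_{0,t})^2$ where $r_t=P(X_t=Y_t\mid W)=E[p_t^2+(1-p_t)^2\mid \ldots]$, hmm, this is getting tangled; the clean route is to compute $\langle X,Y\rangle_t$ both ways: on one hand $\langle X,Y\rangle_t=\int_0^t\mathrm{sgn}(X_s)\mathrm{sgn}(Y_s)\,ds$, and on the other hand, since for $t>\tau(x)$ both live on $\{\pm W^+_{0,t}\}$, the bracket is controlled by the time both are on the same side; comparing with the known bracket of the Wiener flow (where $p=\tfrac12$) via an identity that is \emph{linear} in the law forces $p_t=\tfrac12$. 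Concretely I would show $E[\mathrm{sgn}(X_t)\mathrm{sgn}(Y_t)]$ is determined (it equals the corresponding quantity for the reflecting/standard BM pair computed directly from $W$), and then that $E[(2p_t-1)^2]=0$.

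Let me instead sketch the route I actually expect to be cleanest, which is the Wiener-chaos / conditional-expectation argument the introduction alludes to. Since $K$ is a Wiener solution, $K_{0,t}f(x)$ is $\mathcal F^W_{0,t}$-measurable, so $K_{0,t}f(x)=E[K_{0,t}f(x)\mid W]$; and the compatibility gives $E[K_{0,t}f(x)\,K_{0,t}g(x)\mid W]=E[K^{\otimes2}_{0,t}(f\otimes g)(x,x)\mid W]=P^2_t(f\otimes g)(x,x)$ after taking a further expectation. Writing $H_t:=K_{0,t}(x)$ as the random measure $p_t\delta_{W^+}+(1-p_t)\delta_{-W^+}$ on $\{t>\tau(x)\}$, the Wiener-measurability says $p_t$ is $\sigma(W)$-measurable; then $E[p_t(1-p_t)]$ is exactly the probability that two independent points sampled from $K_{0,t}(x)$ land on opposite sides, which equals $\tfrac12 E[1_{X_t Y_t<0}]$-type quantity computable from the two-point martingale problem; since the two-point motion's bracket $\int_0^t\mathrm{sgn}X_s\,\mathrm{sgn}Y_s\,ds$ is \emph{prescribed} by \eqref{animal}, and in the Wiener case $X,Y$ coalesce after meeting (the common driving noise pathwise-forces them together once $|X|=|Y|=0$ simultaneously, i.e. at $\tau(x)$), we get that for $t>\tau(x)$ the two points are on the same side, i.e. $p_t\in\{0,1\}$ — no wait, that is only if they coalesce, which again needs pathwise uniqueness. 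So the genuinely delicate point, and the main obstacle, is establishing that a \emph{Wiener} two-point motion from $(x,x)$ is coalescing (equivalently $X_t=Y_t$ for all $t$): this should follow because both $X$ and $Y$ are $\sigma(W)$-measurable functionals solving the same equation and the Wiener solution is unique \emph{as a flow of mappings on the diagonal}, or more robustly from Tsirelson-type rigidity of Brownian filtrations — but the point of this paper is to avoid exactly that. The elementary replacement I would push through: show directly via It\^o's formula that $E[(2p_t-1)^2 (W^+_{0,t})^2 \,1_{t>\tau(x)}]$ solves an ODE forcing it to vanish, using that $\langle X,Y\rangle$ is prescribed and that $X_tY_t$ is a semimartingale with computable bracket against $W$. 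I expect the bookkeeping with the indicator $1_{t>\tau(x)}$ and the behaviour at the single time $\tau(x)$ (where the mass splits) to be the fiddly part, handled by a localization/approximation argument as in the proof of Proposition \ref{propop}.
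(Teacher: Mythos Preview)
Your reduction is on the right track but contains two gaps, and the second one is where the real content lies.

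First, the smaller gap: from the fact that the one-point motion $X^x$ satisfies $|X^x_t|=W^+_{0,t}$ on $\{t>\tau(x)\}$ you conclude that the \emph{random measure} $K_{0,t}(x)$ is a.s.\ supported on $\{\pm W^+_{0,t}\}$. That is a statement about $K_{0,t}(x)$ itself, not about the averaged law $P^1_t(x,\cdot)=E[K_{0,t}(x,\cdot)]$; knowing the law of the one-point motion does not by itself locate the support of $K_{0,t}(x)$ almost surely. This step needs the identity below.

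Second, and this is the main issue: to prove $p_t=\tfrac12$ you reach for the two-point motion and try to extract an ODE from $\langle X,Y\rangle_t=\int_0^t \mathrm{sgn}(X_s)\mathrm{sgn}(Y_s)\,ds$. None of the sketched routes closes (and you say so yourself); in particular, the attempted ``the Wiener two-point motion from $(x,x)$ coalesces'' is false --- for $K^W$ the pair $(X_t,Y_t)$ is conditionally i.i.d.\ given $W$, each equal to $\pm W^+_{0,t}$ with probability $\tfrac12$, so $X\neq Y$ with positive probability. The two-point bracket constraint is satisfied by every $K^m$, so by itself it cannot single out $m=\delta_{1/2}$; you would still need to feed in the Wiener property somewhere in a decisive way, and your proposal never does.

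The paper's argument avoids the two-point motion entirely. Using the auxiliary Feller semigroup $Q_t(f\otimes g)(x,w)=E[K_{0,t}f(x)\,g(w+W_t)]$, one constructs a Markov pair $(X^x,B)$ with $B$ a Brownian motion and proves, by a short induction on the number of time points (the same device as in the proof of Proposition~\ref{propop}), the identity
\[
K_{0,t}f(x)=E\big[f(X^x_t)\,\big|\,\mathcal F^{W}_{0,t}\big]\qquad\text{a.s.}
\]
This single formula does all the work: it immediately gives the support statement (since $|X^x_t|$ is $W$-measurable), and it identifies $p_t$ as $P(\mathrm{sgn}(X^x_t)=1\mid W)$. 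The proof is then finished by the classical fact about Tanaka's SDE that on $\{t>\tau(x)\}$ the sign $\mathrm{sgn}(X^x_t)$ is a fair coin independent of $W$, so the conditional expectation is exactly $\tfrac12(\delta_{W^+_{0,t}}+\delta_{-W^+_{0,t}})$. What you were missing is precisely this conditional-expectation representation of $K_{0,t}f(x)$ in terms of a \emph{single} weak solution coupled to $W$; once you have it, no two-point analysis is needed.
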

\begin{proof}
We will use the Feller semigroup
$$Q_t(f\otimes g)(x,w)=E[K_{0,t}f(x) g(w+W_t)].$$
Fix $x\in\R$ and $t>0$. Since $K$ is a Wiener solution, there exists a measurable function $F_{t,x}:C([0,t],\R)\rightarrow\mathcal P(\R)$ such that $K_{0,t}(x)=F_{t,x}(W_u, u\le t)$. Let $(X^x,B)$ be the Markov process associated to $Q$ and started from $(x,0)$. Then $B$ is a Brownian motion which we denote by $W$ and denote also $\tilde{K}_{0,t}(x)=F_{t,x}(B_u, u\le t)$ by $K_{0,t}(x)$ to simplify notations. We will prove the following: For all measurable bounded $f:\R\rightarrow\R$ a.s. 
\begin{equation}\label{yes}
K_{0,t}f(x) = E[f(X^x_t)|\mathcal F^W_{0,t}].
\end{equation}
We will check that for all $t_1\le\cdots\le t_{n-1}\le t_n=t$ and all bounded functions $f, g_1,\cdots,g_n$, we have 
\begin{equation}\label{rassoul}
E\big[K_{0,t} f(x)\prod_{i=1}^{n} g_i(W_{t_i})\big]=E\big[f(X^x_t)\prod_{i=1}^{n} g_i(W_{t_i})\big].
\end{equation}
This is easy to prove by induction on $n$. For $n=1$, (\ref{rassoul}) is immediate from the definition of $Q$. Let us prove the result for $n=2$. We have 
$$E[K_{0,t} f(x)g_1(W_{t_1})g_2(W_{t})] = E[K_{0,t_1}(Q_{t - t_1}(f\otimes g_2)(\cdot,W_{t_1})(x) g_1(W_{t_1})].$$
On the other hand 
$$E[f(X^x_t)g_1(W_{t_1})g_2(W_{t})] = E[Q_{t - t_1}(f\otimes g_2)(X^x_{t_1},W_{t_1})g_1(W_{t_1})].$$
Now the equality between both quantities holds using a uniform approximation of $Q_{t_2 - t_1}(f\otimes g)$ by a linear combination of functions of the form $h\otimes k$, $h, k\in C_0(\R)$. Let us derive the expression of $K_{0,t}(x)$. Proposition \ref{propop} (1) shows that $X^x$ is solution of (\ref{oui}) driven by $W$ with initial condition $X^x_0=x$. As $\text{sgn}(X^x_t)$ is independent of $W$ on the event $\{t>\tau_0(x)\}$ and $|X^x_t|=W^+_{0,t}$, the proof is finished.
\end{proof}
Since the law of $(X^x,W)$ is easy to describe here, we got an explicit expression of the Wiener solution. When the law of $(X^x,W)$ is unique with $X^x$ a weak solution of an SDE driven by $W$ and starting from $x$, the arguments above may be applied to prove that at least the Wiener solution to the generalized equation is unique whenever it exists. Let us discuss the example considered in \cite{MR000}.
\begin{proposition} (3.1 of \cite{MR000})
Given $W^-$ and $W^+$ two independent real white noises, there exists at most one Wiener flow $K$ such that for all $s\leq t, x\in\R$, $f\in C^2_b(\mathbb R)$, a.s.
\begin{eqnarray}\label{prtt}
K_{s,t}f(x)&=&f(x)+\int_s^tK_{s,u}(f'1_{]-\infty,0]})(x)dW^-_{s,u}\nonumber\\
&+&\int_s^tK_{s,u}(f'1_{]0,\infty[})(x)dW^+_{s,u}+\frac{1}{2}\int_s^t K_{s,u}f''(x)du.\nonumber\\ \label{popo}
\end{eqnarray}
\end{proposition}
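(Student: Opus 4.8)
The plan is to transcribe the proof of Proposition \ref{ali}; the one genuinely new ingredient is weak uniqueness for the one point motion, which is now driven by the \emph{pair} of independent white noises $(W^-,W^+)$.

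Following the proof of Proposition \ref{propop}~(1), I would introduce the Feller semigroup
$$Q_t(f\otimes g^-\otimes g^+)(x,w^-,w^+)=E\big[K_{0,t}f(x)\,g^-(w^-+W^-_t)\,g^+(w^+ + W^+_t)\big],$$
and let $(X^x,B^-,B^+)$ be its Markov process started from $(x,0,0)$; then $B^-,B^+$ are independent Brownian motions, which I rename $W^-,W^+$. It\^o's formula applied to (\ref{popo}) identifies the generator of $Q$ on products $f\otimes g^-\otimes g^+$ (with $f$ in $\mathcal D=\{h\in C^2(\R):h,h',h''\in C_0(\R),\ h'(0)=0\}$) with the analogue $\tfrac12\Delta+1_{]-\infty,0]}(x)\,\partial_x\partial_{w^-}+1_{]0,\infty[}(x)\,\partial_x\partial_{w^+}$ of the operator in Proposition \ref{propop}~(1); comparing the two It\^o decompositions of $f(X^x_t)g^-(W^-_t)g^+(W^+_t)$ exactly as there gives $\langle X^x,W^-\rangle_t=\int_0^t 1_{]-\infty,0]}(X^x_s)\,ds$, $\langle X^x,W^+\rangle_t=\int_0^t 1_{]0,\infty[}(X^x_s)\,ds$ and, after the usual approximation of the identity by functions of $\mathcal D$, that $X^x$ is a weak solution of
\begin{equation}\label{onepoint}
X^x_t=x+\int_0^t 1_{]-\infty,0]}(X^x_s)\,dW^-_s+\int_0^t 1_{]0,\infty[}(X^x_s)\,dW^+_s .
\end{equation}

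Since $K$ is a Wiener solution there is a measurable $F_{t,x}:C([0,t],\R^2)\to\mathcal P(\R)$ with $K_{0,t}(x)=F_{t,x}\big((W^-_u,W^+_u)_{u\le t}\big)$. Replacing $K_{0,t}(x)$ by this functional of the driving noises of $Q$, the induction over cylinder functionals $\prod_{i\le n}g_i(W^-_{t_i},W^+_{t_i})$ from the proof of Proposition \ref{ali} (semigroup property of $Q$, uniform approximation of $Q_\bullet(f\otimes g^-\otimes g^+)$ by linear combinations of tensor products) yields
$$K_{0,t}f(x)=E\big[f(X^x_t)\mid\mathcal F^{W^-,W^+}_{0,t}\big]\qquad\text{for all bounded measurable }f.$$
It is therefore enough to know that (\ref{onepoint}) has a unique solution in law, jointly with $(W^-,W^+)$. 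This is the only new computation: given any solution of (\ref{onepoint}), put $a_s=1_{]-\infty,0]}(X^x_s)$, $b_s=1_{]0,\infty[}(X^x_s)$ and $N_t=\int_0^t b_s\,dW^-_s+\int_0^t a_s\,dW^+_s$. Since $a+b\equiv1$ and $ab\equiv0$, L\'evy's characterisation shows $(X^x,N)$ is a planar Brownian motion started from $(x,0)$, while
$$W^-_t=\int_0^t a_s\,dX^x_s+\int_0^t b_s\,dN_s,\qquad W^+_t=\int_0^t b_s\,dX^x_s+\int_0^t a_s\,dN_s,$$
so $(X^x,W^-,W^+)$ is the image of $(X^x,N)$ under a fixed measurable map on path space and its law is uniquely determined (this fact is also contained in \cite{MR000}).

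Consequently the regular conditional law of $X^x_t$ given $\mathcal F^{W^-,W^+}_{0,t}$ is one and the same measurable functional of $(W^-_u,W^+_u)_{u\le t}$ for every Wiener solution, so the identity above pins down $F_{t,x}$, hence $K_{0,t}(x)$, $\mathbb P$-a.s.; running the same argument with the semigroups started at time $s$ pins down each $K_{s,t}(x)$, so any two Wiener flows satisfying (\ref{popo}) are modifications of one another. The main obstacle is the weak uniqueness step for (\ref{onepoint}): unlike the classical Tanaka equation, where $W$ is an explicit functional of $X^x$ and uniqueness is immediate, here neither $W^-$ nor $W^+$ can be recovered from $X^x$ alone, and one must exhibit the auxiliary Brownian motion $N$ carrying the extra randomness orthogonal to $X^x$; once this is done, the remainder is a line by line adaptation of Proposition \ref{ali}.
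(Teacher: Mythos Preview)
Your argument is correct and rests on the same three ingredients as the paper's: the auxiliary Feller semigroup with the driving noises adjoined as extra coordinates, the identity $K_{0,t}f(x)=E[f(X^x_t)\mid\mathcal F^{W^-,W^+}_{0,t}]$ for Wiener solutions, and weak uniqueness for the one-point SDE driven by $(W^-,W^+)$. The one structural difference is that the paper takes two Wiener solutions $K^1,K^2$ simultaneously, forms the product flow $K^1\otimes K^2$ on $\R^2$, and builds a single Feller semigroup on $\R^4$ so that the two conditional expectations $E[f(X_t)\mid\mathcal F^B]$ and $E[f(Y_t)\mid\mathcal F^B]$ live on the \emph{same} probability space with the \emph{same} driving pair $B$; equality of $K^1_{0,t}(x)$ and $K^2_{0,t}(x)$ is then immediate from $(X,B)\overset{law}{=}(Y,B)$. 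You instead treat one $K$ at a time and argue that the regular conditional law, viewed as a Borel functional on Wiener space, is pinned down by the joint law---equally valid, since the marginal of $(W^-,W^+)$ is always two-dimensional Wiener measure so the null-set ambiguity is harmless, but a shade less direct. As a bonus you spell out the weak-uniqueness step via the auxiliary Brownian motion $N$, which the paper merely cites from \cite{MR000}.
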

\begin{proof}
Let $W^1$ and $W^2$ be two independent standard Brownian motions. For a given $x$, the SDE
\begin{equation}\label{dr}
dX^x_t=1_{\{X^x_t\le 0\}} dW^1_t + 1_{\{X^x_t>0\}} dW^2_t,\ X^x_0=x
\end{equation}
has a weak solution and moreover the law $\mathbb Q_x$ of $(X^x,W^1,W^2)$ is unique (see Proposition 4.1 in \cite{MR000}). Now let $K^1$ and $K^2$ be two Wiener solutions of (\ref{popo}), then 
$$K_{s,t}(x,y)=K^1_{s,t}(x)\otimes K^2_{s,t}(y)$$
is a stochastic flow of kernels on $\R^2$ and 
$$Q^2_t(f\otimes g\otimes h)(x,y,w)=E[K^1_{s,t}f(x)K^2_{s,t}g(y)h(w+ W_t)]$$
where $f,g\in C_0(\R), h\in C_0(\R^2)$ and $x,y\in\R, w\in\R^2$ defines a Feller semigroup on $\R^4$. Fix $s=0, t>0, x\in\R$, and let $(X,Y,B)$ be the Markov process associated to $Q$ started from $(x,x,0)$, then $B=(B^1,B^2)$ is a two dimensional Brownian motion and following the proof of (\ref{yes}), we have for all $f\in C_0(\R)$ a.s.
$$N^1_{0,t}f(x) = E[f(X_t)|\mathcal F^B_{0,t}],\ \ N^2_{0,t}f(x) = E[f(Y_t)|\mathcal F^B_{0,t}]$$
where $(N^1_{0,t}(x),N^2_{0,t}(x))$ is a copy of $(K^1_{0,t}(x),K^2_{0,t}(x))$. Using martingale problems as in the proof of Proposition \ref{propop} (1), we easily check that $(X,B)$ and $(Y,B)$ satisfy (\ref{dr}) and thus have law $\mathbb Q_x$. Consequently, for all $f\in C_0(\R)$, a.s.
$$E[f(X_t)|\mathcal F^B_{0,t}] = E[f(Y_t)|\mathcal F^B_{0,t}]$$
which yields that $K^1_{0,t}(x)=K^2_{0,t}(x)$ a.s. 
\end{proof}
The method described here applies also for the examples considered in \cite{MR2835247,MR50101111,MR0000000,MR501878} but there the notion of weak solution starting from a single point should be defined carefully (see Definition 1.1 in \cite{MR0000000}). 
\begin{remark}\label{mama}
This remark is a consequence of Exercise 3.13 on page 204 \cite{MR1725357}. Let $(X^x,W)$ be a weak solution of (\ref{oui}) and let $f:\R\rightarrow\R$ be measurable and bounded. Fix $t>0$ and define $\psi(s,x)=p_{t-s} f(x)$ for $0<s<t, x\in\R$ where $p$ is the semigroup of the standard Brownian motion. Applying It\^o's formula for the semimartingale $(s,X^x_s)$ and then letting $s\uparrow t$, we see that
$$f(X^x_t)=p_t f(x) + \int_{0}^{t} \left[(p_{t-u} f)' \text{sgn}\right] (X^x_u) dW_u.$$
Using (\ref{yes}), we get  
$$K_{0,t}f(x)=p_t f(x) + \int_{0}^{t} K_{0,u}((p_{t-u} f)'\text{sgn})(x) dW_u.$$
Iterating this relation, we obtain the Wiener chaos expansion of $K_{0,t} f(x)$. The same method works without difficulty for the SDE (\ref{prtt}) and the SDE considered in \cite{MR50101111}. For the SDEs studied in \cite{MR2835247,MR0000000}, one could generalize the previous idea by establishing first an It\^o's formula for $(t,X_t)$ where $X$ is a Walsh's Brownian motion and then proceeding as above.
\end{remark}
\section{Classification of weak solutions}\label{tr}
In this section, we present an easy proof of Theorem \ref{a} (3). So fix $(K,W)$ a solution of $(T)$. Since $\mathcal F^W_{s,t}\subset \mathcal F^K_{s,t}$ for all $s\le t$, we can define $\hat{K}$ a Wiener stochastic flow obtained by filtering $K$ with respect to $\sigma(W)$ (Lemma 3-2 (ii) in \cite{MR2060298}). Thus by conditioning with respect to $W$, we see that $(\hat{K},W)$ also solves $(T)$. By the result of the previous section $\hat{K}$ is therefore a modification of $K^W$. Thus setting $U_{s,t}=K_{s,t}(0,[0,\infty[)$, we deduce that for all $s\le t$ and $x\in\R$, a.s.
$$K_{s,t}(x)=\delta_{x+\textrm{sgn}(x)W_{s,t}}1_{ \{t\leq \tau_{s}(x)\}}+(U_{s,t}\delta_{W_{s,t}^{+}}+(1-U_{s,t})\delta_{-W_{s,t}^{+}}) 1_{ \{t> \tau_{s}(x)\}}.$$
Set $U_{t}:=U_{0,t}$ for $t>0$. It remains to prove the following 
\begin{proposition}
For all $t>0$, $U_{t}$ is independent of $(W_{u})_{u\ge 0}$ and the law of $U_{t}$ does not depend on $t>0$. Denote by $m$ the law of $U_{t}$ for $t>0$, then $K$ and $K^m$ have the same law.
\end{proposition}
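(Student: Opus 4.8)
The plan is to reduce the statement entirely to a claim about the two-point motion $(X,Y)$ associated to $K$ started from $(0,0)$, using the observation highlighted in the introduction that $\mathrm{sgn}(X_t)Y_t$ should be a skew Brownian motion. First I would set up the relevant process: by Proposition~\ref{propop}(1) applied to the two-point semigroup (or rather to $Q^2$ enriched with the driving white noise), both $X$ and $Y$ solve Tanaka's SDE~(\ref{oui}) driven by the same Brownian motion $W$, with $X_0=Y_0=0$; in particular $X_t=W^+_{0,t}\cdot\varepsilon^X_t$ and $Y_t=W^+_{0,t}\cdot\varepsilon^Y_t$ where $\varepsilon^X_t=\mathrm{sgn}(X_t)$, $\varepsilon^Y_t=\mathrm{sgn}(Y_t)\in\{-1,+1\}$, and $|X_t|=|Y_t|=W^+_{0,t}$ for all $t>0$. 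The key random variable is $U_t=K_{0,t}(0,[0,\infty[)$, which by the filtering argument just before the Proposition equals $P(\varepsilon^Y_t=+1\mid W)$ (using a second independent copy; more precisely $U_t$ is the conditional probability, given $W$, that an independent coordinate lands in $[0,\infty[$), so understanding the joint law of $\big(\varepsilon^X,\varepsilon^Y,W\big)$ — equivalently the law of $\mathrm{sgn}(X_t)Y_t$ — is exactly what is needed.

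The central step is to identify $Z_t:=\mathrm{sgn}(X_t)Y_t=\varepsilon^X_t\varepsilon^Y_t W^+_{0,t}$ as a skew Brownian motion. I would do this via the martingale-problem / Itô–Tanaka route used already in the proof of Proposition~\ref{propop}(1): compute $d(\varepsilon^X_t Y_t)$ by Tanaka's formula. Since $X_t=W^+_{0,t}\varepsilon^X_t$ with $W^+$ a reflecting Brownian motion, $\varepsilon^X$ only flips at times when $W^+_{0,t}=0$, and $Y_t$ vanishes at exactly those times (as $|Y_t|=W^+_{0,t}$); so the ``$Y\,d\varepsilon^X$'' contribution is supported on $\{Z_t=0\}$ and, after a careful Tanaka/occupation-time computation, produces a term proportional to the local time of $Z$ at $0$. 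One gets $dZ_t = \mathrm{sgn}(X_t)\,dY_t + (\text{local-time term}) = dB_t + (2\alpha-1)\,dL^0_t(Z)$ for a Brownian motion $B$ and a constant $\alpha\in[0,1]$ depending only on the law of $K$ — i.e. $Z$ is $\mathrm{SBM}(\alpha)$. This is the step I expect to be the main obstacle: making the Tanaka-formula bookkeeping rigorous at the sign changes (the process $\varepsilon^X$ is not a semimartingale, so one must instead work with $X$ and $Y$ directly, use the occupation-times formula, and control the excursions of $W^+$), and verifying that the resulting coefficient is a deterministic constant rather than something time- or path-dependent. Monotonicity/compatibility of the flow should force constancy, but this needs an argument.

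Once $Z=\mathrm{SBM}(\alpha)$ is established, the rest is bookkeeping. From the skew Brownian motion's excursion decomposition, on each excursion of $W^+_{0,\cdot}$ away from $0$ the pair $(\varepsilon^X_t,\varepsilon^Y_t)$ is constant, $\varepsilon^X_t=+1$ with probability $1/2$ independently across excursions and independently of $W$ (since $X$ is Tanaka-driven and its sign on each excursion is a fair coin independent of $|X|=W^+$), while conditionally on $\varepsilon^X$ the sign $\varepsilon^Y_t$ agrees with $\varepsilon^X_t$ on a given excursion with a fixed probability governed by $\alpha$ — so that within the excursion straddling time $t$, $P(\varepsilon^Y_t=+1\mid W,\varepsilon^X)$ takes one of two values, and hence $U_t=P(\varepsilon^Y_t=+1\mid W)$ is a $\{p,1-p\}$-valued random variable (for the appropriate $p=p(\alpha)$), independent of $(W_u)_{u\ge0}$ by the excursion-theory independence, with a law $m$ that does not depend on $t$ by scaling/the Markov property. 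The mean constraint $\int_0^1 x\,dm(x)=1/2$ follows since $E[U_t]=P(\varepsilon^Y_t=+1)=1/2$ ($Y$ being Tanaka-driven). Finally, having shown that for every $t$ the conditional law of $K_{0,t}(0)$ given $W$ is $U_t\delta_{W^+_{0,t}}+(1-U_t)\delta_{-W^+_{0,t}}$ with $U_t\sim m$ independent of $W$, and that $K_{s,t}(x)$ is the explicit functional of $W$ and $U_{s,t}$ displayed before the Proposition, I would conclude $K\overset{law}{=}K^m$ by checking that the family $(U_{s,t})_{s\le t}$ satisfies the characterizing conditions (i)--(iii) of Section~\ref{tou}: (i)--(ii) are what we just proved, and (iii) — the conditional law of $U_{s,t}$ given the other $U_{s_i,t_i}$'s and $W$ — follows because $U_{s,t}$ depends only on the excursion of $W$ straddling $\mathrm{min}_{s,t}$ (two intervals share the same $U$ precisely when they share that minimizing excursion), which is exactly the cocycle structure encoded in (iii). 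Uniqueness in law of $K^m$ for fixed $m$ (already part of Theorem~\ref{a}(1), via the construction in Section~\ref{tou}) then gives $K\overset{law}{=}K^m$.
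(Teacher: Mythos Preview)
Your approach has a genuine gap: the two-point motion alone cannot determine the law of $U_t$, nor its independence from $W$. The bridge between $U_t=K_{0,t}(0,[0,\infty[)$ (which lives on the space carrying $K$) and the $n$-point motion is the identity
\[
E\big[U_t^{\,n}\,g(W_t)\big]\;=\;E\big[\mathbf 1_{\{X^1_t\ge 0,\ldots,X^n_t\ge 0\}}\,g(B_t)\big],
\]
a special case of~(\ref{putry}). So the two-point motion controls only the \emph{second} moment of $U_t$: your skew Brownian motion $Z=\mathrm{SBM}(\alpha)$ pins down $\alpha=\alpha_2=\tfrac12\big(1+\int_0^1(2x-1)^2\,dm(x)\big)$ and nothing more. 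Two distinct measures $m$ with mean $\tfrac12$ and the same variance --- for instance $\tfrac12(\delta_{1/4}+\delta_{3/4})$ versus $\tfrac34\delta_{1/2}+\tfrac18\delta_0+\tfrac18\delta_1$ --- give identical two-point motions but different flows, so your argument cannot separate them.

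Relatedly, the identification ``$U_t=P(\varepsilon^Y_t=+1\mid W)$'' is incorrect: on the $(X,Y,B)$ space, $Y$ solves Tanaka's SDE driven by $B$, so $P(Y_t\ge 0\mid B)=\tfrac12$ regardless of the flow --- this is just the filtering $\hat K_{0,t}(0,[0,\infty[)=\tfrac12$ already used before the Proposition. Consequently your conclusion that $U_t$ is $\{p,1-p\}$-valued is false ($m$ uniform on $[0,1]$ is admissible, for example). The paper follows the same skew-Brownian-motion idea you outline, but carries it out for the $n$-point motion for \emph{every} $n$: it shows that $\mathrm{sgn}(X^1_t)\cdots\mathrm{sgn}(X^k_t)\,B^+_t$ is a skew Brownian motion for each $k$ (via a discrete Markov-chain approximation rather than the It\^o--Tanaka bookkeeping you sketch), obtaining independence of the full sign vector from $B^+$ and hence $E[U_t^N g(W)]=E[U_t^N]\,E[g(W)]$ for all $N$. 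The final identification $K\overset{law}{=}K^m$ then follows immediately because both flows yield the same family $(P^n)_n$; there is no need to verify (i)--(iii) of Section~\ref{tou} directly.
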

The last claim is a direct consequence of the two first ones, since $K$ and $K^m$ will define the same compatible family of Feller semigroups $P^n_t=E[K^{\otimes n}_{0,t}]=E[{K_{0,t}^m}^{\otimes n}], n\ge 1$. Now the rest of this section will be devoted to the proof of this proposition. Denote by $m_t$ the law of $U_{t}$ for $t>0$. The key observation is the following
\begin{lemma}\label{immm}
The following assertions are equivalent
\begin{itemize}
\item[(i)] For all $t>0$, $U_{t}$ is independent of $(W_{u})_{u\ge 0}$ and the law of $U_{t}$ does not depend on $t>0$.
\item[(ii)] For all $n\ge 1$, if $(X^1,\cdots,X^n)$ is the $n$ point motion associated to $K$ started from $(0,\cdots,0)$, then $(\text{sgn}(X^1_t),\cdots,\text{sgn}(X^n_t))$ is independent of $|X^1|$ (which is also equal to any $|X^i|$) for all $t>0$ and the law of $(\text{sgn}(X^1_t),\cdots,\text{sgn}(X^n_t))$ does not depend on $t>0$.
\end{itemize}

\end{lemma}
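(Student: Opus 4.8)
The plan is to show that each side of the equivalence is equivalent to a statement about the joint law of $\big((\mathrm{sgn}(X^1_t),\dots,\mathrm{sgn}(X^n_t)),\, W\big)$, and that these intermediate statements match up. Recall from the description of $K$ just above the lemma that, started from $(0,\dots,0)$, the $n$-point motion has the form where $|X^i_t|=W^+_{0,t}$ for every $i$ (since $\tau_0(0)=0$, all particles have instantly left the origin and $X^i_t=\varepsilon^i_t W^+_{0,t}$ with $\varepsilon^i_t=\mathrm{sgn}(X^i_t)$), and $W^+_{0,t}=W_t-\min_{u\in[0,t]}W_u$ is a measurable functional of $(W_u)_{u\ge 0}$. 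The first key point I would record is therefore: $|X^1_t|$ is $\sigma(W)$-measurable (indeed $\mathcal F^{|X^1|}_t\subset\mathcal F^W_t$), so ``independence of $(\varepsilon^1_t,\dots,\varepsilon^n_t)$ from $|X^1_t|$'' is a consequence of — and I will argue, under (i), follows from — the stronger independence of $(\varepsilon^1_t,\dots,\varepsilon^n_t)$ from all of $(W_u)_{u\ge 0}$.

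\textbf{(i) $\Rightarrow$ (ii).} Assume $U_t$ is independent of $(W_u)_{u\ge 0}$ with law $m$ not depending on $t$. I would express the $n$-point motion explicitly in terms of the flow: $X^i_t = K_{0,t}$-sampled points, but more concretely use that conditionally on $W$ the $n$ points are drawn (independently? — no, via the kernel) — here the cleaner route is to use the semigroup identity $P^n_t=E[K_{0,t}^{\otimes n}]$. Conditionally on $W$, the signs $(\varepsilon^1_t,\dots,\varepsilon^n_t)$ are obtained by sampling $n$ i.i.d. points from the one-dimensional kernel $K_{0,t}(0)=U_{0,t}\delta_{W^+_{0,t}}+(1-U_{0,t})\delta_{-W^+_{0,t}}$; so conditionally on $W$ (equivalently on $U_{0,t}$ and $W^+_{0,t}$, but $U_{0,t}$ is independent of $W$), each $\varepsilon^i_t$ is $+1$ with probability $U_t$ and $-1$ with probability $1-U_t$, independently over $i$. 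Hence the conditional law of $(\varepsilon^1_t,\dots,\varepsilon^n_t)$ given $W$ depends only on $U_t$, which is independent of $W$; integrating out $U_t\sim m$ gives that $(\varepsilon^1_t,\dots,\varepsilon^n_t)$ is independent of $\sigma(W)$, with law $\int_0^1 (\text{Bernoulli}(u))^{\otimes n}\,m(du)$ not depending on $t$. Since $|X^1_t|$ is $\sigma(W)$-measurable, (ii) follows.

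\textbf{(ii) $\Rightarrow$ (i).} Conversely, assume (ii). Taking $n=2$ with $X^1,X^2$ two \emph{independent} samples from $K_{0,t}(0)$ given $W$ (which is the correct description of the two-point motion of a flow of kernels), I would compute, for the conditional probabilities $p_t:=\mathbb P(\varepsilon^1_t=+1\mid W)=U_t$ (a $\sigma(W)$-measurable random variable), the quantities $\mathbb P(\varepsilon^1_t=+1)=E[U_t]$ and $\mathbb P(\varepsilon^1_t=\varepsilon^2_t=+1)=E[U_t^2]$, and more generally the $k$-th moment $E[U_t^k]$ from the $n$-point statistics $\mathbb P(\varepsilon^1_t=\dots=\varepsilon^k_t=+1)$. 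Assumption (ii) says these joint sign-probabilities are (a) independent of $|X^1_t|$, hence of the $\sigma(W)$-random variable $W^+_{0,t}$, and (b) independent of $t$. From (b) and the moment computation, $E[U_t^k]$ does not depend on $t$ for every $k\ge 1$; since the $U_t$ take values in the compact set $[0,1]$, equality of all moments forces equality of laws, so $m_t$ does not depend on $t$. For independence: I would need (ii) to control not just independence from $|X^1_t|$ but from the whole path $(W_u)_{u\ge0}$. The honest move is to upgrade (ii) slightly — or to observe that the natural reading of (ii) in context (the particles at time $t$, and $W$ is recoverable from the flow) does give path-independence. Concretely, conditioning additionally on an arbitrary bounded functional $\Phi$ of $W$: by the tower property and the conditional-Bernoulli structure, $E[\mathbf 1_{\varepsilon^1_t=\dots=\varepsilon^k_t=+1}\,\Phi] = E[U_t^k\,\Phi]$; if (ii) is interpreted (via the many-point motions, which see the full driving noise since $\mathcal F^W\subset\mathcal F^K$) as independence of the sign vector from $\sigma(W)$, we get $E[U_t^k\Phi]=E[U_t^k]E[\Phi]$ for all $k$, and again the moment/compactness argument promotes this to independence of $U_t$ from $\sigma(W)$.

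\textbf{Main obstacle.} The delicate point is precisely the independence direction (ii) $\Rightarrow$ (i): as literally stated, (ii) only asserts independence of the sign vector from $|X^1_t|$, which is $\mathcal F^W_t$-measurable but a priori a strictly smaller $\sigma$-algebra than $\sigma((W_u)_{u\ge 0})$. The resolution is to exploit that the $n$-point motions of the \emph{flow} recover $W$ in the limit (as in the construction $W_{s,t}=\lim_{x\to\infty}(\varphi_{s,t}(x)-x)$ from Proposition \ref{propop}), so that controlling all $n$-point sign statistics is genuinely equivalent to controlling the conditional law of the signs given all of $W$; once that identification is made, the rest is the soft moment-matching argument on $[0,1]$. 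I would make sure to state the reduction carefully rather than gloss over which $\sigma$-algebra ``independent of $|X^1|$'' is being tested against, since that is exactly where the original proof's Brownian-filtration subtleties were hiding.
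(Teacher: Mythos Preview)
Your heuristics are in the right direction, but there is a genuine misreading that creates an obstacle where the paper sees none. In (ii), ``independent of $|X^1|$'' means independent of the \emph{process} $(|X^1_u|)_{u\ge 0}$, not of the single random variable $|X^1_t|$. This matters: once one has coupled the $n$-point motion with a Brownian motion $B$ driving Tanaka's SDE (the paper does this by introducing the Feller semigroup $Q^n_t(f\otimes g)(x,w)=E[K_{0,t}^{\otimes n}f(x)\,g(w+W_t)]$ and taking $(X^1,\dots,X^n,B)$ to be its Markov process started at $0$), one has $|X^1|=B^+$, and a reflected Brownian motion generates the \emph{same} filtration as the Brownian motion itself (from $B_t=B^+_t-L^0_t(B^+)$). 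So independence of the sign vector from $|X^1|$ is automatically independence from all of $\sigma(B)$. The bridge between $B$ and $W$ is then the identity
\[
E\Big[\prod_{i=1}^{N} f_i(X^1_{t_i},\dots,X^n_{t_i})\,g_i(B_{t_i})\Big]
= E\Big[\prod_{i=1}^{N} K_{0,t_i}^{\otimes n}f_i(0)\,g_i(W_{t_i})\Big],
\]
proved by induction from the definition of $Q^n$. Taking $f_N=\mathrm{sgn}^{\otimes n}$ at $t_N=t$ and the other $f_i\equiv 1$, the left side factorises by (ii), while the right side equals $E[(2U_t-1)^n\prod_j g_j(W_{t_j})]$; this gives $E[(2U_t-1)^n\,g(W)]=E[(2U_t-1)^n]\,E[g(W)]$ for every $n$ and every cylindrical $g$, hence independence of $U_t$ from $W$ and time-independence of its law, by the moment argument you already sketched.

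Your proposed fix via ``recovering $W$ from the flow in the large-$x$ limit'' is therefore unnecessary, and in any case would not sit on the $n$-point motion's probability space. Likewise, your (i)$\Rightarrow$(ii) argument is only a one-time-marginal computation; to get independence from the \emph{process} $|X^1|$ you again need a realisation where both live together, which is exactly what the $Q^n$ construction provides.
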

\begin{proof}
For $n\ge 1$,  
$$Q^n_t(f\otimes g)(x,w)=E[K^{\otimes n}_{0,t}f(x) g(w+ W_t)],$$
$f\in C_0(\R^n), g\in C_0(\R), x\in\R^n, w\in\R$ defines a Feller semigroup on $\R^n\times\R$. Denote by $(X^1,\cdots,X^n,B)$ the Markov process associated to $Q^n$ and started from $(0,\cdots,0,0)$. An easy induction similar to the proof of (\ref{rassoul}) shows that for all $N\ge 1$ and all bounded continuous functions $f_1,\cdots,f_N:\R^n\rightarrow\R, g_1,\cdots,g_N:\R\rightarrow\R$, we have 
\begin{equation}\label{putry}
E\bigg[\prod_{i=1}^{N} f_i(X^1_{t_i},\cdots,X^n_{t_i})g_i(B_{t_i})\bigg]=E\bigg[\prod_{i=1}^{N} K_{0,t_i}f_i(0)g_i(W_{t_i})\bigg].
\end{equation}
By Proposition \ref{ali}, $X^i$ is a solution of Tanaka's SDE driven by $B$ and so $|X^i_t|=B^+_t:=B_t-\inf_{0\le u\le t} B_u$ for all $i$. Now (i) entails (ii) is clear using (\ref{putry}). Assume (ii), then by (\ref{putry}) $E[U_{t}^N g(W)]=E[U_{t}^N] E[g(W)]$ for each $N$ so that $U_{t}$ is independent of $W$ and similarly the law of $U_t$ does not depend on $t$.
\end{proof}
In the rest of this section $n\ge 1$ is fixed and $(X^1,\cdots,X^n)$ is the $n$ point motion associated to $K$. We will prove (ii) in Lemma \ref{immm} for $n$ which reduces to proving that for all $1\le i_1\le \cdots\le i_k\le n$, $\text{sgn}(X^{i_1}_t)\cdots\text{sgn}(X^{i_k}_t)$ is independent of $B$. This is sufficient since it yields that for all $k_1,\cdots,k_n$,
$$E[(\text{sgn}(X^{1}_t))^{k_1}\cdots(\text{sgn}(X^{n}_t))^{k_n} h(B)]$$
coincides with 
$$E[(\text{sgn}(X^{1}_t))^{k_1}\cdots(\text{sgn}(X^{n}_t))^{k_n}] E[h(B)]$$
for any measurale bounded $h:C(\R_+,\R)\rightarrow\R$. To simplify notations, we take $i_1=1,\cdots,i_k=k$. We will need the following lemma which is an easy consequence of the strong Markov property

\begin{lemma}
For any $\epsilon>0$, define the stopping times $\tau^{\epsilon}_0=0$, 
\begin{eqnarray}
\sigma^{\epsilon}_l&=&\inf\{u\ge \tau^{\epsilon}_l : B^+_u=\epsilon\},\nonumber\\
\tau^{\epsilon}_{l+1}&=&\inf\{u\ge \sigma^{\epsilon}_l : B^+_u=0\}.\nonumber\
\end{eqnarray}
Then, the sequence $(\text{sgn}(X^i_{\sigma^{\epsilon}_l}))_{1\le i\le k}, l=0,1,\cdots$ is i.i.d.  Moreover for any $l$ and $\epsilon'>0$, $(\text{sgn}(X^i_{\sigma^{\epsilon}_l}))_{1\le i\le k}$ and $(\text{sgn}(X^i_{T}))_{1\le i\le k}$ have the same law where 
$$T=\inf\{u\ge 0 : B^+_u=\epsilon'\}.$$
\end{lemma}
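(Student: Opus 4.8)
The plan is to derive both claims from the strong Markov property applied at the stopping times $\tau^\epsilon_l$ and $\sigma^\epsilon_l$. First I would observe that at each time $\tau^\epsilon_l$ we have $B^+_{\tau^\epsilon_l}=0$, hence $|X^i_{\tau^\epsilon_l}|=0$ for every $i$, so all the coordinates of the $n$-point motion are simultaneously at the origin. By the strong Markov property of the Markov process $(X^1,\dots,X^n,B)$ associated to $Q^n$, the post-$\tau^\epsilon_l$ evolution is an independent copy of the process started afresh from $(0,\dots,0,0)$; in particular, conditionally on $\mathcal F_{\tau^\epsilon_l}$ the random vector $(\text{sgn}(X^i_{\sigma^\epsilon_l}))_{1\le i\le k}$ depends only on the freshly restarted trajectory run up to the first time its reflected part hits $\epsilon$. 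Since $\sigma^\epsilon_l$ is precisely that first hitting time $T=\inf\{u\ge 0:B^+_u=\epsilon\}$ measured from $\tau^\epsilon_l$, this simultaneously shows that $(\text{sgn}(X^i_{\sigma^\epsilon_l}))_{1\le i\le k}$ has the same law as $(\text{sgn}(X^i_T))_{1\le i\le k}$, and that these vectors for $l=0,1,2,\dots$ are independent and identically distributed.

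Next I would spell out the comparison between $\epsilon$ and an arbitrary $\epsilon'>0$. Because $\sigma^\epsilon_0=\inf\{u\ge 0:B^+_u=\epsilon\}$ is itself the stopping time $T$ with the parameter $\epsilon$ in place of $\epsilon'$, the statement ``$(\text{sgn}(X^i_{\sigma^\epsilon_l}))_{1\le i\le k}$ and $(\text{sgn}(X^i_T))_{1\le i\le k}$ have the same law'' reduces, via the i.i.d.\ property just established, to showing that the law of $(\text{sgn}(X^i_T))_{1\le i\le k}$ does not depend on the threshold. This I would get from the Brownian scaling of the driving noise together with uniqueness in law of the $n$-point motion: rescaling space by $\epsilon'/\epsilon$ and time by $(\epsilon'/\epsilon)^2$ maps the problem with threshold $\epsilon$ to the one with threshold $\epsilon'$ while preserving the signs of the coordinates, since scaling is sign-neutral and by Proposition \ref{ali} (applied through \eqref{putry}) each $X^i$ is a solution of Tanaka's SDE driven by $B$, whose law is invariant under Brownian scaling.

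The main obstacle I anticipate is making the strong Markov application fully rigorous at $\tau^\epsilon_l$: one must check that these are genuine stopping times for the filtration of $(X^1,\dots,X^n,B)$ (they are, being hitting times of the continuous adapted process $B^+$, which is a functional of $B$), that they are a.s.\ finite (true, since $B^+$ is a reflected Brownian motion and oscillates), and above all that at $\tau^\epsilon_l$ the \emph{joint} state of the $n$-point motion is exactly $(0,\dots,0)$ so that the restarted process is the $n$-point motion from the origin and not from some other configuration. This last point is where the specific structure of Tanaka's equation enters: $|X^i_t|=B^+_t$ for all $i$ by Proposition \ref{ali}, so $B^+_{\tau^\epsilon_l}=0$ forces every $X^i_{\tau^\epsilon_l}=0$ simultaneously. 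Once this is in hand, the i.i.d.\ conclusion and the law-comparison are immediate consequences of, respectively, iterating the strong Markov property and combining it with scaling invariance.
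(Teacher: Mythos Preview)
Your treatment of the i.i.d.\ claim via the strong Markov property at the times $\tau^\epsilon_l$ is exactly what the paper intends (it offers no proof beyond the remark that the lemma ``is an easy consequence of the strong Markov property''), and your identification of the key structural fact---that $|X^i|=B^+$ forces all coordinates back to the origin at $\tau^\epsilon_l$---is the right ingredient.

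The comparison between $\epsilon$ and $\epsilon'$, however, has a gap. Your scaling argument needs scale invariance of the \emph{joint} law of the $n$-point motion $(X^1,\dots,X^n)$, not merely of each coordinate. Knowing that each $X^i$ solves Tanaka's SDE driven by $B$ does not determine the joint law, so the rescaled process, while again a system of Tanaka solutions driven by a common Brownian motion, need not have the same joint law as the original; that joint scale invariance would follow from $K\overset{law}{=}K^m$, which is precisely what this section is proving. A strong-Markov-only argument in the spirit of the paper: for $0<\epsilon'<\epsilon$, the signs are constant on each excursion of $B^+$, and $T_\epsilon$ lies in the excursion containing $\sigma^{\epsilon'}_L$, where $L$ is the first index $l$ for which $B^+$ reaches $\epsilon$ before returning to $0$ after $\sigma^{\epsilon'}_l$. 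Since $X^1$ is an $\mathcal F^{X,B}$-Brownian motion (by L\'evy's characterization), the conditional probability of this event given $\mathcal F_{\sigma^{\epsilon'}_l}$ depends only on $|X^1_{\sigma^{\epsilon'}_l}|=\epsilon'$ and not on the sign vector $S_l:=(\text{sgn}(X^i_{\sigma^{\epsilon'}_l}))_i$. Combined with the i.i.d.\ structure at the $\tau^{\epsilon'}_l$, this makes $L$ independent of $(S_l)_l$, so the sign vector at $T_\epsilon$, which equals $S_L$, has the law of $S_0$, i.e.\ the law of the signs at $T_{\epsilon'}$.
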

Now set $$\alpha_k=\mathbb P(\text{sgn}(X^1_{T})\cdots\text{sgn}(X^k_{T})=1)$$
where $T$ is a stopping time as in the previous lemma, then we have the following
\begin{proposition}
$Z_t=\text{sgn}(X^1_{t})\cdots\text{sgn}(X^k_{t}) B^+_t$ is a skew Brownian motion with parameter $\alpha_k$. In particular, $\text{sgn}(X^1_{t})\cdots\text{sgn}(X^k_{t})$ is independent of $|Z|=B^+$ for all $t>0$.
\end{proposition}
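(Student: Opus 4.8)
The plan is to identify $Z_t=\text{sgn}(X^1_t)\cdots\text{sgn}(X^k_t)\,B^+_t$ as a skew Brownian motion by a martingale/excursion argument, exploiting that $B^+$ is a reflecting Brownian motion and that the sign process can only switch at the zeros of $B^+$. First I would record the basic structure: $|Z_t|=B^+_t$, and away from the zero set of $B^+$ the product $\text{sgn}(X^1_t)\cdots\text{sgn}(X^k_t)$ is constant (since each $X^i$ is a Tanaka solution driven by $B$, so $X^i$ changes sign only when $|X^i|=B^+$ hits $0$); hence $Z$ is continuous, it is a Brownian motion on each excursion interval of $B^+$ away from $0$, and the only thing to pin down is the ``law of the sign'' with which each excursion of $B^+$ is attached to the positive or negative half-line. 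That is exactly what $\alpha_k$ measures.

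Next I would make the excursion description precise using the approximating stopping times $\tau^\epsilon_l,\sigma^\epsilon_l$ from the preceding lemma. For fixed $\epsilon>0$, the intervals $[\sigma^\epsilon_l,\tau^\epsilon_{l+1})$ capture the excursions of $B^+$ that reach height $\epsilon$, and the lemma tells us that the signs $s_l:=\text{sgn}(X^1_{\sigma^\epsilon_l})\cdots\text{sgn}(X^k_{\sigma^\epsilon_l})$ are i.i.d. with $\mathbb P(s_l=1)=\alpha_k$ (using the ``same law as $(\text{sgn}(X^i_T))$'' part of the lemma). Moreover, by the strong Markov property applied at $\sigma^\epsilon_l$, conditionally on $s_l$ the path of $B^+$ on $[\sigma^\epsilon_l,\tau^\epsilon_{l+1})$ is an independent Brownian excursion-type piece, and the product of signs does not change on that interval. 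So on the ``reaches height $\epsilon$'' excursions, $Z$ behaves as a Brownian motion whose excursion signs are i.i.d.\ $\pm1$ with mean-$\alpha_k$ weighting, independently of the magnitudes; letting $\epsilon\downarrow0$ recovers all excursions. I would then invoke the standard characterization of skew Brownian motion (e.g.\ via Itô's excursion theory: SBM of parameter $\alpha$ is the process obtained from reflecting BM by independently assigning sign $+1$ with probability $\alpha$ and $-1$ with probability $1-\alpha$ to each excursion; equivalently via the martingale problem / Harrison--Shepp characterization as the unique solution of $dZ_t=dW_t+(2\alpha-1)dL^0_t(Z)$) to conclude that $Z$ is SBM$(\alpha_k)$.

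To justify the independence of the excursion signs from the reflecting-BM path in a clean way, I would actually prefer to phrase the argument through the semigroup identity (\ref{putry}): for any bounded measurable functional $h$ of $B$ and any word in the signs, the joint law of $(\text{sgn}(X^i_{\cdot}), B)$ is completely determined by $K$ and $W$, and the renewal/i.i.d.\ structure at the times $\sigma^\epsilon_l$ shows that conditioning on $B$ does not bias the signs. Concretely: fix $t$; on the event that $B^+$ has had $N$ completed $\epsilon$-excursions before $t$, write $\text{sgn}(X^1_t)\cdots\text{sgn}(X^k_t)=s_{N}$ (or the sign at the last $\sigma^\epsilon$ before $t$), and use the i.i.d.\ property plus the strong Markov property of $B^+$ to factor the expectation $E[s_N\, h(B)]$ as $E[s_N]\,E[h(B)]$ up to an error that vanishes as $\epsilon\to0$; passing to the limit gives independence of $\text{sgn}(X^1_t)\cdots\text{sgn}(X^k_t)$ from $B$, which is the ``in particular'' assertion and also identifies the skewness parameter as constant in $t$.

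The main obstacle I anticipate is the limiting argument $\epsilon\downarrow0$: one must argue that the discrete ``excursions reaching height $\epsilon$'' scheme genuinely converges to the full excursion decomposition of $B^+$ and that the sign attached to an excursion is well-defined in the limit (i.e.\ that $\text{sgn}(X^1_u)\cdots\text{sgn}(X^k_u)$ is indeed constant on each excursion interval of $B^+$ and takes a limiting value as the excursion shrinks), and that no mass of time is lost among the small excursions that never reach height $\epsilon$. This is where Itô's excursion theory for the reflecting Brownian motion $B^+$ does the real work; once the excursion-sign picture is established, identifying the limit as skew Brownian motion with parameter $\alpha_k$ and deducing the independence of $\text{sgn}(X^1_t)\cdots\text{sgn}(X^k_t)$ from $|Z|=B^+$ is a routine consequence of the symmetry of excursion magnitudes under sign flips.
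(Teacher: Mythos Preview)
Your approach is correct in spirit but follows a genuinely different route from the paper. You argue via It\^o's excursion theory: since $|Z|=B^+$ and the product of signs is constant on each excursion of $B^+$, it suffices to show that the excursion signs are i.i.d.\ Bernoulli$(\alpha_k)$ independently of $B^+$, and then invoke the excursion characterization of skew Brownian motion. The paper instead uses a discrete random-walk embedding: it defines stopping times $T^N_l=\inf\{t>T^N_{l-1}:|X^1_t-X^1_{T^N_{l-1}}|=2^{-N}\}$, shows that $S^N_l=2^N\,\text{sgn}(X^1_{T^N_l})\cdots\text{sgn}(X^k_{T^N_l})\,B^+_{T^N_l}$ is a simple random walk skewed at the origin with parameter $\alpha_k$, and then appeals to the classical Harrison--Shepp limit theorem together with $T^N_{\lfloor 2^{2N}t\rfloor}\to t$ a.s.\ to identify the limit as skew Brownian motion.

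The trade-off is exactly the obstacle you flag. Your route requires controlling the $\epsilon\downarrow 0$ limit and, more importantly, upgrading the preceding lemma from ``the signs $(s_l)_l$ are i.i.d.'' to ``the signs are i.i.d.\ \emph{and independent of} $B^+$'', which is not stated there and needs a separate (though routine) strong-Markov argument. The paper's embedding sidesteps both issues: the Markov-chain property of $(S^N_l)_l$ follows directly from the strong Markov property of the $n$-point motion at the $T^N_l$, no excursion measure is invoked, and the independence of sign from $|Z|$ then comes for free as a well-known property of skew Brownian motion rather than as an input to the construction. Your approach is more conceptual (it explains \emph{why} $Z$ is skew BM) but less self-contained; the paper's is more elementary and leaves nothing to fill in.
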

\begin{proof}
For $N\ge 1$, define 
$$T^N_0=0,\ T^{N}_{l+1}=\inf\{t>T^N_l : |X^1_t - X^1_{T^N_l}|=2^{-N}\}$$
and 
$$S^N_l = 2^N \text{sgn}(X^1_{T^N_l})\cdots\text{sgn}(X^k_{T^N_l}) B^+_{T^N_l},\ \ l=0,1,\cdots.$$
For every $N$, $(S^N_l)_l$ is a Markov chain started at $0$ the law whose law is given by the transition probabilities
$$Q(0,1)=1-Q(0,-1)=\alpha_k,\ \ Q(m,m+1)=Q(m,m-1)=1/2,\ m\ne 0.$$
In particular, $\left(2^{-N} S^N_{\lfloor 2^{2N}t\rfloor}\right)_{t\ge 0}$ converges in finite dimensional distributions as $N\rightarrow\infty$ to the skew Brownian motion with parameter $\alpha_k$ (see \cite{MR606993}). Now $\lim_{N\rightarrow\infty}T^N_{{\lfloor 2^{2N}t\rfloor}}=t$ a.s. uniformly on compact sets (see \cite{MRolm} page 31). Since $Z$ is continuous, $Z_{T^N_{{\lfloor 2^{2N}t\rfloor}}}$ converges to $Z_t$ a.s. uniformly on compact sets. But $Z_{T^N_{{\lfloor 2^{2N}t\rfloor}}}=2^{-N} S^N_{\lfloor 2^{2N}t\rfloor}$, so we deduce that $Z$ is a skew Brownian motion with parameter $\alpha_k$.

\end{proof}
By the fundamental result of \cite{MR606993}, $V_t=Z_t - (2\alpha_k-1) L_t(Z)$ is a standard Brownian motion where $L_t(Z)$ stands for the symmetric local time of $Z$ and $Z$ is also the unique strong solution to $Z_t=V_t + (2\alpha_k-1) L_t(Z)$. The next proposition gives more informations on the Brownian motion $V$ 
\begin{proposition}
We have  
 $$Z_t = \int_{0}^{t} \text{sgn}(X^1_{s})\cdots\text{sgn}(X^k_{s}) dB_s + (2\alpha_k-1) L_t(Z).$$
In particular, $Z_t$ and $V_t=\int_{0}^{t} \text{sgn}(X^1_{s})\cdots\text{sgn}(X^k_{s}) dB_s$ define the same filtrations.
\end{proposition}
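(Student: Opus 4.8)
The plan is to identify the canonical semimartingale decomposition of $Z$. Write $\eta_s=\text{sgn}(X^1_s)\cdots\text{sgn}(X^k_s)$, so that $Z_t=\eta_t B^+_t$ and the candidate martingale part is $V_t=\int_0^t\eta_s\,dB_s$; since $\eta$ is bounded and progressively measurable with $\eta_s^2=1$, this $V$ is a continuous local martingale with $\langle V\rangle_t=t$, hence a standard Brownian motion by L\'evy's characterization. By the previous proposition $Z$ is a skew Brownian motion, so by \cite{MR606993} it is a continuous semimartingale with $Z_t=\widetilde V_t+(2\alpha_k-1)L_t(Z)$, where $\widetilde V$ is a standard Brownian motion and $L(Z)$ is the symmetric local time of $Z$ at $0$; in particular $\langle Z\rangle_t=t$. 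Since $Z_0=\widetilde V_0=V_0=0$, the whole statement will follow once I show $\widetilde V=V$, and as both are Brownian motions this reduces to checking $\langle\widetilde V,V\rangle_t=t$, i.e.\ $\langle\widetilde V-V\rangle\equiv0$.

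The core of the argument, and what I expect to be the main obstacle, is the identity $\langle Z,B\rangle_t=\int_0^t\eta_s\,ds$. I would write $B^+_t=B_t+M_t$ with $M_t=-\inf_{0\le u\le t}B_u$; then $M$ is continuous, nondecreasing, and increases only on the Lebesgue-null set $\mathcal Z=\{s:B^+_s=0\}$. On each excursion interval $(g,d)$ of $B^+$ away from $0$, every $X^i$ stays away from $0$ (recall $|X^i|=B^+$ by Proposition \ref{ali}) and hence, being continuous, keeps a constant sign; thus $\eta$ equals a constant $\eta_{(g,d)}$ on $(g,d)$, and $M$ is constant there as well, so $Z-\eta_{(g,d)}B=\eta_{(g,d)}M$ is constant on $(g,d)$ and $d\langle Z,B\rangle_s=\eta_{(g,d)}\,d\langle B\rangle_s=\eta_s\,ds$ there. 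Since $[0,t]\setminus\mathcal Z$ is a countable union of such intervals, the restriction of the measure $d\langle Z,B\rangle$ to $[0,t]\setminus\mathcal Z$ equals $\eta_s\,ds$, while by the Kunita--Watanabe inequality its restriction to $\mathcal Z$ has total variation at most $\bigl(\int_{\mathcal Z\cap[0,t]}d\langle Z\rangle\bigr)^{1/2}\bigl(\int_{\mathcal Z\cap[0,t]}d\langle B\rangle\bigr)^{1/2}=0$. This gives $\langle Z,B\rangle_t=\int_0^t\eta_s\,ds$.

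From here the conclusion is immediate: $\langle\widetilde V,V\rangle_t=\langle Z,V\rangle_t=\int_0^t\eta_s\,d\langle Z,B\rangle_s=\int_0^t\eta_s^2\,ds=t$, hence $\langle\widetilde V-V\rangle_t=t-2t+t=0$ and $\widetilde V=V$ a.s., so $Z_t=\int_0^t\text{sgn}(X^1_s)\cdots\text{sgn}(X^k_s)\,dB_s+(2\alpha_k-1)L_t(Z)$. For the final assertion, $V_t=Z_t-(2\alpha_k-1)L_t(Z)$ is a path functional of $Z$, so $\mathcal F^V_t\subseteq\mathcal F^Z_t$; conversely, by the strong-uniqueness property for the skew Brownian motion equation recalled above, $Z$ is the strong solution driven by $V$, so $\mathcal F^Z_t\subseteq\mathcal F^V_t$, and the two filtrations coincide.

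The delicate point is really the excursion-interval step: the fact that $\eta$ is constant on excursions of $B^+$ has to be extracted from $|X^i|=B^+$ together with continuity of the $X^i$, one must handle the random endpoints $(g,d)$ rigorously (e.g.\ via the stopping times $\sigma^{\epsilon}_l,\tau^{\epsilon}_l$ already introduced, or by localizing on the predictable open set $\{B^+>0\}$), and one must exclude a singular part of $d\langle Z,B\rangle$ living on the zero set --- which is precisely what the Kunita--Watanabe bound achieves once one knows $\langle Z\rangle_t=t$.
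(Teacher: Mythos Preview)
Your argument is correct and complete. The paper's own proof is simply a pointer: it observes that $Z$ is obtained from $B^+$ by flipping each excursion according to the i.i.d.\ signs $(\eta_{\sigma^\epsilon_l})_l$, and then cites Proposition~3 of \cite{MR090}, where exactly this computation is carried out (with $E[\xi_1]$ replaced here by $2\alpha_k-1$). Your route is more self-contained: instead of invoking that external proposition you use the previous proposition ($Z$ is a skew Brownian motion, hence $Z=\widetilde V+(2\alpha_k-1)L(Z)$ by \cite{MR606993}) and then identify $\widetilde V$ with $V$ by computing $\langle Z,B\rangle$. Both approaches rest on the same fact---that $\eta$ is constant on the excursions of $B^+$---but you turn it into a quadratic-variation calculation rather than appealing to a ready-made lemma.

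One remark on the point you flag as delicate. The excursion + Kunita--Watanabe argument for $\langle Z,B\rangle_t=\int_0^t\eta_s\,ds$ is fine, but it can be replaced by a single line that avoids random endpoints altogether: since $M$ has finite variation and $|Z|=B^+$,
\[
\langle Z,B\rangle_t=\langle Z,B^+\rangle_t=\langle Z,|Z|\rangle_t=\int_0^t\text{sgn}(Z_s)\,d\langle Z\rangle_s=\int_0^t\text{sgn}(Z_s)\,ds=\int_0^t\eta_s\,ds,
\]
the last equality because $\text{sgn}(Z_s)=\eta_s$ whenever $B^+_s>0$, and $\{s:B^+_s=0\}$ has Lebesgue measure zero. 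This sidesteps the localization on excursion intervals entirely.
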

\begin{proof}
The claim follows exactly the proof of Proposition 3 in \cite{MR090}. The slight difference here is that $Z$ is obtained by flipping independently the excursions of $B^+$ with an i.i.d sequence $\{\xi_l\}$ such that $\mathbb P(\xi_l=1)=1-\mathbb P(\xi_l=1)=\alpha_k$, so that $E[\xi_1]$ at the end of the proof of Proposition 3 \cite{MR090} will be replaced with $(2\alpha_k-1)$. 
\end{proof}
Note that since $E[ \text{sgn}(X^1_{t})\cdots\text{sgn}(X^k_{t})]=2\alpha_k-1$, we see from (\ref{putry}) that
$$\alpha_k=\frac{1}{2}\bigg(1+\int_{0}^{1} (2x-1)^k dm(x)\bigg).$$
This shows that the moments of $m$ up to the order $k$ are uniquely determined by the $k$ point motions.
\section{Generators of the $n$-point motions}\label{trrr}
In this section $K=K^m$ is a solution of Tanaka's SDE associated to $m$. Our purpose here is to write the generator of $P^n_t=E[K_{0,t}^{\otimes n}]$ on a core of $C_0(\R^n)$ and show its dependence on $m$. 

For all $\epsilon_1,\cdots,\epsilon_i \in \{-1,1\}$, set 
$$M_{\epsilon_1,\cdots,\epsilon_i}=\int_{0}^1 \alpha^{I} (1-\alpha)^{i-I} dm(\alpha)$$
where $I=\text{Card}\{j\in [1,i] : \epsilon_j=1\}$. Let $\mathcal D_m$ be the set of all functions $f:\R^n\rightarrow \R$ which are in $C_0(\R^n)$ and satisfy the following assumptions
\begin{itemize}
\item[(i)] For all $E=E_1\times\cdots\times E_n$, with $E_i=\R^{\ast}_+$ or $\R^{\ast}_-$, $f_{|E}$ is the restriction on $E$ of a $C^2$ function $g$ (=$g_E$) on $\R^n$ such that for all $i,j$, $\frac{\partial g}{\partial x_i},\frac{\partial^2 g}{\partial x_i\partial x_j}$ are in $C_0(\R^n)$ and for all $x\in\partial E$, 
$$\lim_{y\rightarrow x, y\in E} \frac{\partial^2 f}{\partial x_i\partial x_j}(y)=0. $$
\item[(ii)] For all $x=(x_1,\cdots,x_n)\in\R^n$ if $\{x_h, x_h=0\}=\{x_{i_1},\cdots,x_{i_j}\}$ with $i_1\le \cdots \le i_j$, then 

$$\sum_{\epsilon_{i_1},\cdots,\epsilon_{i_j}\in \{-1,1\}} M_{\epsilon_{i_1},\cdots,\epsilon_{i_j}} \sum_{k\in\{i_1,\cdots,i_j\}}\epsilon_k\lim_{h\rightarrow0+}\frac{\partial f}{\partial x_k}(y_h(x))=0$$
where the $l$-th coordinate of $y_h(x)$ is given by $\epsilon_l h$ if $l\in\{i_1,\cdots,i_j\}$ and by $x_l$ otherwise.

\end{itemize}
Note that $\mathcal D_m$ is dense in $C_0(\R^n)$ since it contains $\mathcal D\otimes\cdots\otimes \mathcal D$ with $\mathcal D=\{f\in C^2(\R), f,f',f''\in C_0(\R), f'(0)=0\}$. We have the following
\begin{proposition} The generator of $P^n$ coincides on $\mathcal D_m$ with $A^n$ where for $f\in\mathcal D_m$ and $x=(x_1,\cdots,x_n)\in\R^n$, $A^n f(x)$ is defined by: If $x_i\ne 0$ for all $1\le i\le n$, then 
$$A^nf(x):=\frac{1}{2} \sum_{h,k} \text{sgn}(x_h)\text{sgn}(x_k)\frac{\partial^2 f}{\partial x_h\partial x_k}(x).$$
If $\{x_h, x_h=0\}=\{x_{i_1},\cdots,x_{i_j}\}$ with $i_1\le \cdots \le i_j$, then 
$$A^nf(x_1,\cdots,x_n):=\frac{1}{2} \sum_{\epsilon_{i_1},\cdots,\epsilon_{i_j}\in \{-1,1\}} M_{\epsilon_{i_1},\cdots,\epsilon_{i_j}} \sum_{h,k\in\{i_1,\cdots,i_j\}} \epsilon_h\epsilon_k \frac{\partial^2 f}{\partial x_h\partial x_k}(x)$$
$$ + \frac{1}{2} \sum_{\epsilon_{i_1},\cdots,\epsilon_{i_j}\in \{-1,1\}} M_{\epsilon_{i_1},\cdots,\epsilon_{i_j}}  \sum_{h,k\notin\{i_1,\cdots,i_j\}}\text{sgn}(x_h)\text{sgn}(x_k)\frac{\partial^2 f}{\partial x_h\partial x_k}(x) $$
$$+\sum_{\epsilon_{i_1},\cdots,\epsilon_{i_j}\in \{-1,1\}} M_{\epsilon_{i_1},\cdots,\epsilon_{i_j}} \sum_{h\in\{i_1,\cdots,i_j\},k\notin\{i_1,\cdots,i_j\}} \epsilon_h\text{sgn}(x_k)\frac{\partial^2 f}{\partial x_h\partial x_k}(x).$$

\end{proposition}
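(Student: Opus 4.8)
The plan is to identify the generator of the $n$-point motion by combining the description of $K^m$ given in Theorem \ref{a}(1) with the martingale-problem technology already used in Section \ref{reminder}. The natural object to work with is the auxiliary Feller semigroup
$$Q^n_t(f\otimes g)(x,w)=E[K^{\otimes n}_{0,t}f(x)\,g(w+W_t)],$$
whose $n$-point motion is $(X^1,\dots,X^n,B)$ with $B$ a real Brownian motion, because the presence of $B$ keeps track of the driving noise and lets one read off the joint bracket structure. First I would recall, from Proposition \ref{propop}(1) (or rather its $n$-point analogue), that each $X^i$ solves Tanaka's SDE driven by the \emph{same} $B$, so that $\langle X^i,X^j\rangle_t=\int_0^t\text{sgn}(X^i_s)\text{sgn}(X^j_s)\,ds$ and $|X^i_t|=B^+_t$ whenever the two coordinates started at the same point; away from the diagonal hyperplanes $\{x_i=0\}$ the coordinates move as the ordinary two-point motion of the coalescing/Tanaka flow, and there It\^o's formula for $f\in\mathcal D_m$ gives exactly the first displayed expression $\frac12\sum_{h,k}\text{sgn}(x_h)\text{sgn}(x_k)\partial^2_{x_hx_k}f(x)$. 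So the content of the proposition is really the behaviour on the strata where some coordinates vanish.

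The key step is to understand, when a block of coordinates $\{x_{i_1},\dots,x_{i_j}\}$ sits at $0$, the law of the signs $(\text{sgn}(X^{i_1}_t),\dots,\text{sgn}(X^{i_j}_t))$ once these coordinates leave $0$. From the explicit form of $K^m$ in Theorem \ref{a}(1), at the first time $\tau$ that $B^+$ detaches from $0$ the whole block is sent, simultaneously, to $+B^+_\tau$ with probability $U$ and to $-B^+_\tau$ with probability $1-U$, where $U\sim m$ — but this is only the \emph{one-kernel} picture. For the $n$-point motion one samples the block of $j$ signs by first sampling $U\sim m$ and then, conditionally on $U$, making the $j$ signs i.i.d.\ equal to $+1$ with probability $U$; this is precisely why the weights $M_{\epsilon_{i_1},\dots,\epsilon_{i_j}}=\int_0^1\alpha^{I}(1-\alpha)^{j-I}\,dm(\alpha)$ appear. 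I would make this precise using the compatibility of the family $(P^n)$ together with the explicit excursion-by-excursion construction of $K^m$ in Section \ref{tou}: each excursion of $B^+$ away from $0$ carries its own independent copy of $U$, and within one excursion all coordinates currently at $0$ get the same sign. Thus, conditionally on the driving noise, on each excursion the block behaves like a single point whose sign is $+$ w.p.\ $U$, and the multinomial combinatorics for which subset of the block is positive is exactly $M_{\epsilon_{i_1},\dots,\epsilon_{i_j}}$.

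With that law in hand, I would run the martingale-problem argument on $\mathcal D_m$: for $f\in\mathcal D_m$ the process $f(X_t)-\int_0^t A^nf(X_s)\,ds$ is a martingale. On the open stratum it is the It\^o formula computation above. At a point of a lower stratum, condition \emph{(i)} in the definition of $\mathcal D_m$ (vanishing of the second derivatives as one approaches the stratum) guarantees there is no extra second-order contribution from the boundary, and condition \emph{(ii)} is exactly the requirement that the first-order (drift) terms produced when the block of zero-coordinates is ``released'' with sign-distribution $M$ cancel — it is the natural generalization of the condition $f'(0)=0$ in the one-dimensional case, now averaged against $m$ via the weights $M$. Computing $\langle X^h,X^k\rangle$ across the three cases ($h,k$ both in the block, both outside, one in one out) and substituting the bracket formula $d\langle X^h,X^k\rangle_s=\text{sgn}(X^h_s)\text{sgn}(X^k_s)\,ds$, together with the conditional-sign law giving the $\epsilon_h\epsilon_k$ and $\epsilon_h\,\text{sgn}(x_k)$ terms averaged by $M_{\epsilon_{i_1},\dots,\epsilon_{i_j}}$, reproduces the three sums in the second displayed formula. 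Finally, since $\mathcal D_m\supset\mathcal D^{\otimes n}$ is dense in $C_0(\R^n)$ and $P^n$ is Feller, a standard argument (the martingale problem on a dense domain together with well-posedness of the one-point, hence $n$-point, motion via Proposition \ref{propop}) upgrades ``$A^n$ agrees with the generator on $\mathcal D_m$'' to the stated conclusion. The main obstacle I anticipate is the second, bookkeeping-heavy point: rigorously justifying that across an excursion the block of currently-vanishing coordinates is flipped by a \emph{single} $m$-distributed variable and that successive excursions use independent copies — i.e., transferring the excursion-theoretic construction of $K^m$ in Section \ref{tou} into a clean statement about the conditional law of the sign vector — and then checking that condition \emph{(ii)} is exactly what makes the resulting finite-variation ``drift at the stratum'' vanish.
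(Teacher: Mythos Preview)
Your ingredients are the right ones --- the $M$-weighting on the sign vector and the role of condition (ii) in killing the first-order ``release'' term --- but the route differs from the paper's and the martingale-problem framing has a gap at the strata.

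The paper does not work with the $n$-point motion $(X^1,\dots,X^n)$ or its brackets at all. It computes the pointwise limit $t^{-1}(P^n_tf(x)-f(x))$ directly from the explicit kernel formula of Theorem \ref{a}(1). Since $U_t$ is independent of $W$ with law $m$, and since $\mathbb P(t>\tau_y)=o(t)$ for every $y\neq 0$, one has, up to $o(t)$,
\[
E[K_{0,t}^{\otimes n}f(x)]=\sum_{\epsilon}M_{\epsilon_{i_1},\dots,\epsilon_{i_j}}\,E\bigl[g_\epsilon(W_{t\wedge\tau_y},W^+_{t\wedge\tau_y})\bigr],
\]
where $g_\epsilon(a,b)=f(C_1(a,b),\dots,C_n(a,b))$ with $C_l(a,b)=\epsilon_l b$ for $l$ in the zero block and $C_l(a,b)=x_l+\text{sgn}(x_l)a$ otherwise. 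A single two-dimensional It\^o formula applied to the pair $(W,W^+)$ then produces $\frac12\Delta g_\epsilon+\partial^2_{ab}g_\epsilon$ and a local-time term $\partial_b g_\epsilon(W_s,0+)\,dL_s$; condition (ii) is exactly the vanishing of the $M$-average of this local-time integrand, and what survives after dividing by $t$ and letting $t\downarrow 0$ is $A^nf(x)$. So the whole computation is reduced to It\^o on $\R^2$ rather than on $\R^n$.

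The gap in your plan is this: showing that $f(X_t)-\int_0^t A^nf(X_s)\,ds$ is a martingale does \emph{not} determine the value $A^nf(x)$ at a point $x$ on a stratum, because the process spends Lebesgue-measure-zero time there and the integral is insensitive to how you define the integrand on that null set. To identify the generator pointwise at such $x$ you are forced to compute $\lim_{t\to 0^+}t^{-1}(E_x[f(X_t)]-f(x))$ directly --- which is precisely the short-time expansion the paper performs via the two-dimensional reduction to $(W,W^+)$. Your bracket argument handles the open orthants cleanly, but at the boundary you would in the end have to carry out exactly the computation the paper does, and it is simpler to do it from the start.
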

\begin{proof}
Denote $\tau_0(y)$ by $\tau_y$. Since for all $y\ne 0$, $\lim_{t\rightarrow 0+} \mathbb P(t>\tau_y)/{t}=0$, using the fact that $U_t$ is independent of $W$ and has for law $m$, we have as $t\rightarrow 0+$,
$$E[K_{0,t}^{\otimes n}f(x)] = \sum_{\epsilon_1,\cdots,\epsilon_i\in \{-1,1\}} M_{\epsilon_1,\cdots,\epsilon_i} E[g_{\epsilon_1,\cdots,\epsilon_i}(W_{t\wedge\tau_y},W^+_{t\wedge\tau_y})] + o(t)$$
where $y\ne 0$ is fixed from now on such that $|y|< |x_i|$ for all $x_i\ne 0$ and $g_{\epsilon_1,\cdots,\epsilon_i}$ is defined on $\R^{\ast}\times \R^{\ast}_+$ by
$$g_{\epsilon_1,\cdots,\epsilon_i}(a,b)=f\big(C_1(a,b),\cdots,C_n(a,b)\big)$$
with $C_i(a,b)=\epsilon_i b$ if $i\in \{i_1,\cdots,i_j\}$ and $C_i(a,b)=x_{i} + \text{sgn}(x_{i}) a$ if not. Let $\tilde{f}$ be a $C^2$ extension of $f_{|E}$ as in (i) above where $E=E_1\times\cdots\times E_n$ and $E_i=\R^{\ast}_+$ if $x_i>0$ or $x_i=0$ and $\epsilon_i=1$, $E_i=\R^{\ast}_-$ if $x_i<0$ or $x_i=0$ and $\epsilon_i=-1$. By It\^o's formula applied to $\tilde{f}$, denoting $g=g_{\epsilon_1,\cdots,\epsilon_i}$, we have
\begin{eqnarray}
E[g(W_{t\wedge\tau_y},W^+_{t\wedge\tau_y})]&=&  E\bigg[\int_{0}^{t\wedge\tau_y}\bigg(\frac{1}{2}\Delta g + \frac{\partial^2 g}{\partial a\partial b}\bigg)(W_s,W^+_s) ds\bigg] \nonumber\\
&+& E\bigg[\int_{0}^{t\wedge\tau_y} \frac{\partial g}{\partial b}(W_s,0+) dL_s\bigg]\nonumber\
\end{eqnarray}
where $L_t=-\min_{0\le u\le t} W_u$ and $\Delta$ is the Laplacian on $\R^2$. Since $f\in\mathcal D_m$, by (ii) for all $s\le \tau_y$ we have    
$$\sum_{\epsilon_1,\cdots,\epsilon_i\in \{-1,1\}} M_{\epsilon_1,\cdots,\epsilon_i} \frac{\partial g_{}}{\partial b}(W_s,0+)=0,$$
and so $$\lim_{t\rightarrow0+} t^{-1}(E[K_{0,t}^{\otimes n} f(x)] - f(x))=\sum_{\epsilon_{i_1},\cdots,\epsilon_{i_j}\in \{-1,1\}} M_{\epsilon_{i_1},\cdots,\epsilon_{i_j}}\bigg(\frac{1}{2}\Delta g + \frac{\partial^2 g}{\partial a\partial b}\bigg)(0,0)$$
which is also equal to $A^n f(x)$.
\end{proof}
\bibliographystyle{plain}
\bibliography{Bil8}

\end{document}